
\documentclass{amsart}
\usepackage{amsmath,amssymb}

\textwidth	500pt
\textheight	700pt
\oddsidemargin	-20pt
\evensidemargin -20pt
\topmargin	-35pt

\newcommand{\Conv}{\mathrm{Conv}}
\newcommand{\ConvH}{\mathrm{Conv}_{\mathsf H}}
\newcommand{\BConv}{\mathrm{BConv}}

\newcommand{\w}{\omega}
\newcommand{\U}{\mathcal U}
\newcommand{\I}{\mathcal I}
\newcommand{\F}{\mathcal F}

\newcommand{\C}{\mathcal C}

\newcommand{\IR}{\mathbb R}
\newcommand{\IB}{\mathbb B}
\newcommand{\IS}{\mathbb S}

\newcommand{\IN}{\mathbb N}
\newcommand{\dens}{\mathrm{dens}}

\newcommand{\dist}{\mathrm{dist}}

\newcommand{\Cld}{\mathrm{Cld}}
\newcommand{\conv}{\mathrm{conv}}

\newcommand{\dH}{\mathsf{d}_{\mathsf H}\kern-1pt}

\newcommand{\oco}{\overline{\mathrm{conv}}}

\newcommand{\e}{\varepsilon}

\newcommand{\HH}{\mathcal H}
\newcommand{\cconv}{\overline{\conv}}
\newcommand{\pb}{\mathsf{b}^\circ\kern-1pt}

\newtheorem{theorem}{Theorem}

\newtheorem{introcor}{Corollary}
\newtheorem{proposition}{Proposition}[section]
\newtheorem{corollary}[proposition]{Corollary}

\newtheorem{lemma}[proposition]{Lemma}

\theoremstyle{definition}

\newtheorem{remark}[proposition]{Remark}

\newtheorem{problem}[proposition]{Problem}

\title[Classification of the spaces of convex closed sets]{Recognizing the topology of
the space\\ of closed convex subsets of a Banach space}

\author[T. Banakh]{Taras Banakh}
\address[T. Banakh]{Universytet Jana Kochanowskiego, Kielce (Poland) and Ivan Franko National University of Lviv (Ukraine)}
\email{tbanakh@franko.lviv.ua}

\author[I. Hetman]{Ivan Hetman}
\address[I.Getman]{Department of Mathematics, Ivan Franko National University of Lviv, Universytetska 1, Lviv, 79000, Ukraine}
\email{ihromant@gmail.com}

\author[K. Sakai]{Katsuro Sakai}
\address[K. Sakai]{Institute of Mathematics, University of Tsukuba, Tsukuba, 305-8571, Japan}
\email{sakaiktr@sakura.cc.tsukuba.ac.jp}

\keywords{Banach space, space of closed convex sets, Hilbert space, density, Kunen-Shelah property, polyhedral convex set}
\subjclass{57N20; 46A55; 46B26; 46B20; 52B05; 03E65}

\begin{document}
\begin{abstract} Let $X$ be a Banach space and $\ConvH(X)$ be the space of non-empty closed convex subsets of $X$, endowed with the Hausdorff metric $d_H$. We prove that each connected component $\mathcal H$ of the space $\ConvH(X)$ is homeomorphic to one of the spaces: $\{0\}$, $\IR$, $\IR\times\IR_+$, $Q\times\IR_+$, $l_2$, or the Hilbert space $l_2(\kappa)$ of cardinality $\kappa\ge \mathfrak c$. More precisely, a component $\mathcal H$ of $\ConvH(X)$ is homeomorphic to:
\begin{enumerate}
\item $\{0\}$ iff $\mathcal H$ contains the whole space $X$;
\item $\IR$ iff $\mathcal H$ contains a half-space;
\item $\IR\times\bar\IR_+$ iff $\mathcal H$ contains a linear subspace of $X$ of codimension 1;
\item $Q\times\bar\IR_+$ iff $\mathcal H$ contains a linear subspace of $X$ of finite codimension $\ge 2$;
\item $l_2$ iff $\mathcal H$ contains a polyhedral convex subset of $X$ but contains no linear subspace and no half-space in $X$;
\item $l_2(\kappa)$ for some cardinal $\kappa\ge\mathfrak c$ iff $\mathcal H$ contains no polyhedral convex subset of $X$.
\end{enumerate}
\end{abstract}

\maketitle

\section{Introduction}
In this paper we recognize the topological structure of the space $\ConvH(X)$ of non-empty closed convex subsets of a Banach space $X$. The space $\ConvH(X)$ is endowed with the  Hausdorff metric
$$
\dH(A,B)=\max\big\{\sup_{a\in A}\dist(a,B),\sup_{b\in B}\dist(b,A)\big\}\in[0,\infty]$$
where $\dist(a,B)=\inf_{b\in B}\|a-b\|$ is the distance from a point $a$ to a subset $B$ in $X$. In fact, the topology of $\ConvH(X)$ can be defined directly without appealing to the Hausdorff metric: a subset $\U\subset\ConvH(X)$ is open if and only if for every $A\in\U$ there is an open neighborhood $U$ of the origin in $X$ such that $B(A,U)\subset\U$ where $B(A,U)=\{A'\in\ConvH(X):A'\subset A+U$ and $A\subset A'+U\}$. Here as usual, $A+B=\{a+b:a\in A,\; b\in B\}$ stands for the pointwise sum of sets $A,B\subset X$.
In such a way, for every linear topological space $X$ we can define the topology on the space $\ConvH(X)$ of non-empty closed convex subsets of $X$. This topology will be called {\em the uniform topology} on $\ConvH(X)$ because it is generated by the uniformity whose base consists of the sets
$$2^U=\{(A,A')\in\ConvH(X)^2:A\subset A'+U,\; A'\subset A+U\}$$
where $U$ runs over open symmetric neighborhoods of the origin in $X$.

We shall observe in Remark~\ref{r4.6} that for a Banach space $X$ the space $\ConvH(X)$ is locally connected: two sets $A,B\in\ConvH(X)$ lie in the same connected component of $\ConvH(X)$ if and only if $\dH(A,B)<+\infty$. So, in order to understand the topological structure of the hyperspace $\ConvH(X)$ it suffices to recognize the topology of its connected components. This problem is quite easy if $X$ is a 1-dimensional real space. In this case $X$ is isometric to $\IR$ and a connected component $\mathcal H$ of $\ConvH(X)$ is isometric to:
\begin{enumerate}
\item $\{0\}$ iff $X\in\mathcal H$;
\item $\IR$ iff $\mathcal H$ contains a closed ray;
\item $\IR\times\bar\IR_+$ iff $\mathcal H$ contains a bounded set.
\end{enumerate}
Here $\bar\IR_+=[0,+\infty)$ stands for the closed half-line.

For arbitrary Banach spaces we shall add to this list two more spaces:
\begin{itemize}
\item[(4)] $Q\times\bar\IR_+$, where $Q=[0,1]^\w$ is the Hilbert cube;
\item[(5)] $l_2(\kappa)$, the Hilbert space having an orthonormal basis of cardinality $\kappa$.
\end{itemize}
For $\kappa=\w$ the separable Hilbert space $l_2(\w)$ is usually denoted by $l_2$.
By the famous Toru\'nczyk Theorem \cite{Tor81}, \cite{Tor85}, each infinite-dimensional Banach space $X$ of density $\kappa$ is homeomorphic to the Hilbert space $l_2(\kappa)$. In particular, the Banach space $l_\infty$ of bounded real sequences is homeomorphic to $l_2(\mathfrak c)$. In the sequel we shall identify cardinals with the sets of ordinals of smaller cardinality and endow such sets with
discrete topology. The cardinality of a set $A$ is denoted  by
$|A|$.

Let $X$ be a Banach space. As we shall see in Theorem~\ref{main}, each non-locally compact connected component $\HH$ of the space $\ConvH(X)$ is homeomorphic to the Hilbert spaces $l_2(\kappa)$ of density $\kappa=\dens(\HH)$. This reduces the problem of recognizing the topology of $\ConvH(X)$ to calculating the densities of its components. In fact, separable components $\HH$ of $\ConvH(X)$ have been characterized in \cite{BH2} as components containing a polyhedral convex set.

We recall that a convex subset $C$ of a Banach space $X$ is {\em polyhedral} if $C$ can be written as the intersection $C=\bigcap\F$ of a finite family $\F$ of closed half-spaces. A {\em half-space} in $X$ is a convex set of the form $f^{-1}\big((-\infty,a]\big)$ for some real number $a$ and some non-zero linear continuous functional $f:X\to\IR$. The whole space $X$ is polyhedral being the intersection $X=\bigcap\F$ of the empty family $\F=\emptyset$ of closed half-spaces.

The principal result of this paper is the following classification theorem.

\begin{theorem}\label{main} Let $X$ be a Banach space. Each connected component $\mathcal H$ of the space $\ConvH(X)$ is homeomorphic to one of the spaces: $\{0\}$, $\IR$, $\IR\times\bar\IR_+$, $Q\times\bar\IR_+$, $l_2$, or the Hilbert space $l_2(\kappa)$ of density $\kappa\ge\mathfrak c$. More precisely, $\mathcal H$ is homeomorphic to:
\begin{enumerate}
\item $\{0\}$ iff $\mathcal H$ contains the whole space $X$;
\item $\IR$ iff $\mathcal H$ contains a half-space;
\item $\IR\times\bar\IR_+$ iff $\mathcal H$ contains a linear subspace of $X$ of codimension 1;
\item $Q\times\bar\IR_+$ iff $\mathcal H$ contains a linear subspace of $X$ of finite codimension $\ge 2$;
\item $l_2$ iff $\mathcal H$ contains a polyhedral convex subset of $X$ but contains no linear subspace and no half-space in $X$;
\item $l_2(\kappa)$ for some cardinal $\kappa\ge\mathfrak c$ iff $\mathcal H$ contains no polyhedral convex subset of $X$.
\end{enumerate}
\end{theorem}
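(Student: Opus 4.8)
The plan is to organize the six cases around two independent engines: (A) the body result, previewed in the introduction, that every \emph{non-locally compact} connected component $\HH$ of $\ConvH(X)$ is homeomorphic to $l_2(\dens\HH)$; and (B) the characterization from \cite{BH2} that $\HH$ is separable if and only if it contains a polyhedral convex set. The governing invariant is the recession cone $\mathrm{rec}(C)=\{v\in X:C+v\subseteq C\}$: since sets at finite Hausdorff distance have equal recession cones (Remark~\ref{r4.6}), the cone $R$ is constant on $\HH$, and so is its lineality space $L=R\cap(-R)$, the largest closed subspace with $C+L=C$. First I would record the dictionary ``$\HH$ contains a set of type $T$ $\Leftrightarrow$ $R$ has type $T$'' for $T\in\{$whole space, half-space, linear subspace$\}$: if $R=X$ then $C=X$; if $R$ is a half-space then $L$ has codimension $1$ and every member is a half-space; if $R$ is a subspace then every member is the $q$-preimage of a bounded convex set, where $q\colon X\to X/L$ is the quotient map. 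Together with (B) this yields at once that (1)--(6) are mutually exclusive and exhaustive.

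The four locally compact cases (1)--(4) I would treat through the quotient $q\colon X\to X/L$. When $L$ has finite codimension $n$, it is complemented, $X/L$ is linearly homeomorphic to $\IR^n$, and $\HH$ is carried bi-Lipschitzly onto a component of $\ConvH(\IR^n)$. Case (1) is $\{X\}\cong\{0\}$. If $R$ is a half-space, the members form the pencil of parallel half-spaces $q^{-1}\big((-\infty,b]\big)$, $b\in\IR$, giving $\HH\cong\IR$ (case (2)). If $R=L$ has codimension $1$, the members are the slabs $q^{-1}\big([a,b]\big)$, $a\le b$, so $\HH\cong\{(a,b):a\le b\}\cong\IR\times\bar\IR_+$, i.e.\ $\cc(\IR)$ (case (3)). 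If $R=L$ has finite codimension $n\ge2$, then $\HH\cong\cc(\IR^n)$, the hyperspace of nonempty compact convex subsets of $\IR^n$, which is homeomorphic to $Q\times\bar\IR_+$ by the known description of this hyperspace (Nadler--Quinn--Stavrakas) (case (4)). All four model spaces are locally compact.

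For the remaining components I would first show non-local compactness, so that engine (A) applies. The excluded four types are exactly those whose members are rigid at infinity: a recession cone that is a half-space or a finite-codimensional subspace forces $L$-translation invariance with $X/L$ finite-dimensional. In every other component some member $C_0$ admits a unit-size convex perturbation that can be translated to infinity along a recession direction, producing $C_k\in\HH$ with $\dH(C_k,C_0)$ bounded, pairwise $\e$-separated, and with no convergent subsequence (a notch receding to infinity cannot be tracked by a Hausdorff limit). Hence $\HH$ is non-locally compact and $\HH\cong l_2(\dens\HH)$ by (A). Now I split by (B): if $\HH$ contains a polyhedral set but no subspace and no half-space, then $\HH$ is separable, so $\dens\HH=\w$ and $\HH\cong l_2(\w)=l_2$ (case (5)); if $\HH$ contains no polyhedral set, then $\HH$ is non-separable, and perturbing the curved boundary of a member independently at countably many places (a Kunen--Shelah type construction) yields $\mathfrak c$ many pairwise $\e$-separated members, so $\dens\HH\ge\mathfrak c$ and $\HH\cong l_2(\kappa)$ with $\kappa=\dens\HH\ge\mathfrak c$ (case (6)).

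The main obstacle is engine (A). To obtain it I would verify Toru\'nczyk's characterization of $l_2(\kappa)$-manifolds: that $\HH$ is a completely metrizable ANR and has the discrete approximation property, i.e.\ maps of a discrete family of cubes into $\HH$ can be approximated by maps with discrete images. Granting this, $\HH$ is an $l_2(\dens\HH)$-manifold; to upgrade ``manifold'' to ``homeomorphic to the model'' I would show $\HH$ is contractible via the Minkowski-segment homotopy $(C,t)\mapsto\cl\big((1-t)C+tC_0\big)$, which stays inside $\HH$ because the recession cone is preserved under Minkowski combinations ($sR=R$ and $R+R=R$ for a cone $R$). A contractible $l_2(\kappa)$-manifold is homeomorphic to $l_2(\kappa)$, completing all six cases. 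Verifying the ANR and discrete approximation properties for this hyperspace, together with the density computation producing the $\w$-versus-$\mathfrak c$ gap in (5)--(6), is where essentially all the work lies.
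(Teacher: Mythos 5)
Your overall architecture matches the paper's: quotient by the lineality space of the (constant-on-components) recession cone to handle the four locally compact cases (1)--(4), and reduce (5)--(6) to ``non-locally compact $\Rightarrow$ Hilbert space of the right density''. But the two engines you lean on are exactly where your proposal has genuine gaps. The main one is engine (A). You propose to prove that every non-locally compact component is an infinite-dimensional Hilbert space by verifying Toru\'nczyk's characterization (ANR plus the discrete approximation property) directly on the hyperspace, and you explicitly leave that verification as ``where essentially all the work lies''. That is not a proof outline with a missing routine step; it is the hardest part of the theorem left undone, and the discrete approximation property for $\ConvH(X)$ is not something one can wave at. The paper avoids Toru\'nczyk entirely: the support-function map $\delta:\ConvH(X)\to\bar l_\infty(\IS^*)$, $\delta_C(x^*)=\sup x^*(C)$, is an isometric embedding (Proposition~\ref{p2.1}) that converts Minkowski combination into genuine convex combination and $\max$ into the lattice $\max$ (Proposition~\ref{p4.1}), so each component is isometric to a closed convex subset of the Banach space $l_\infty(\IS^*)$; the Banakh--Cauty theorem \cite{BC} (every non-locally compact closed convex subset of a Banach space is homeomorphic to a Hilbert space) then gives engine (A) for free (Corollary~\ref{c4.5}). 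Without either completing Toru\'nczyk's criterion or discovering such an embedding, cases (5) and (6) are not established.

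The second gap is the density bound in case (6). Your version of engine (B) is ``separable iff contains a polyhedral set'', but the theorem needs the stronger dichotomy $\dens(\HH)<\mathfrak c$ iff $\HH$ contains a polyhedral set (Proposition~\ref{p4.7}, from \cite{BH2}): non-separability alone does not exclude densities strictly between $\w$ and $\mathfrak c$. Your proposed bridge --- ``perturbing the curved boundary of a member independently at countably many places'' to get $\mathfrak c$ pairwise $\e$-separated sets --- is not a construction: members of a non-polyhedral component need not have any usable curvature or extreme structure, and producing uniform separation of continuum many perturbations inside a single component is precisely the nontrivial content of the cited characterization. (By contrast, your sketch of non-local compactness in case (5), a bounded perturbation pushed to infinity along a recession direction, does match the paper's explicit construction $C_n=\cconv(V_{\tilde\HH}\cup\{3^nx+y\})$ with two separating functionals, and is completable once one notes it genuinely requires $\dim(X/L)\ge2$.) In short: the case analysis and the locally compact cases are sound and essentially the paper's, but the two load-bearing ingredients are asserted rather than proved, and the route you chose for the first of them is substantially harder than the one the paper actually takes.
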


Theorem~\ref{main} will be proved in Section~\ref{s7} after some preliminary work done in Sections~\ref{s2}--\ref{s5}.

In Corollary~\ref{cor2} below we shall derive from Theorem~\ref{main} a complete topological classification of the spaces $\ConvH(X)$ for Banach spaces $X$ with Kunen-Shelah property and $|X^*|\le\mathfrak c$.

A Banach space $X$ is defined to have the {\em Kunen-Shelah property} if each closed convex subset $C\subset X$ can be written as intersection $C=\bigcap\F$ of an at most countable family $\F$ of closed half-spaces (in fact, this is one of seven equivalent
Kunen-Shelah properties considered in \cite{GJMMP} and \cite[8.19]{HMVZ}).
For a Banach space $X$ with the Kunen-Shelah property we get
$$|X^*|\le|\ConvH(X)|\le|X^*|^\w.$$
The upper bound $\ConvH(X)\le|X^*|^\w$ follows from the definition of the Kunen-Shelah property while the lower bound $|X^*|\le|\ConvH(X)|$ follows from the observation that a functional $f\in X^*$ is uniquely determined by its polar half-space $H_f=f^{-1}\big((-\infty,1]\big)$.

It is clear that each
separable Banach space has the Kunen-Shelah property. However
there are also non-separable Banach spaces with that property. The
first example of such  Banach space was constructed by S.~Shelah
under $\diamondsuit_{\aleph_1}$ \cite{Shel}. The second example is
due to K.~Kunen who used the Continuum Hypothesis to
construct a non-metrizable scattered compact space $K$ such that
the Banach space $X=C(K)$ of continuous functions on $K$ is
hereditarily Lindel\"of in the weak topology and thus has the
Kunen-Shelah Property, see \cite[p.1123]{Neg84}. The Kunen's space $X=C(K)$ has an additional property that its dual space $X^*=C(X)^*$ has cardinality $|X^*|=\mathfrak c$ (this follows from the fact that each Borel measure on the scattered compact space $K$ has countable support). Let us remark
that for every separable Banach space $X$ the dual space $X^*$ also has cardinality of continuum $|X^*|=\mathfrak c$. It should be mentioned that non-separable Banach spaces with the Kunen-Shelah property can be constructed only under certain additional set-theoretic assumptions. By \cite{Tod06}, there are models of ZFC in which each Banach space with the Kunen-Shelah property is separable.

\begin{introcor}\label{cor1} For a separable Banach space \textup{(}more generally, a Banach space with the Kunen-Shelah property and $|X^*|=\mathfrak c$\textup{)}, each connected component $\mathcal H$ of the space $\ConvH(X)$ is homeomorphic to
$\{0\}$, $\IR$, $\IR\times\bar\IR_+$, $Q\times\bar\IR_+$, $l_2$ or $l_\infty$.
More precisely, $\mathcal H$ is homeomorphic to:
\begin{enumerate}
\item $\{0\}$ iff $\mathcal H$ contains the whole space $X$;
\item $\IR$ iff $\mathcal H$ contains a half-space;
\item $\IR\times\bar\IR_+$ iff $\mathcal H$ contains a linear subspace of $X$ of codimension 1;
\item $Q\times\bar\IR_+$ iff $\mathcal H$ contains a linear subspace of $X$ of  codimension $\ge 2$;
\item $l_2$ iff $\mathcal H$ contains a polyhedral convex set but contains no linear subspace and no half-space;
\item $l_\infty$ iff $\mathcal H$ contains no polyhedral convex set.
\end{enumerate}
\end{introcor}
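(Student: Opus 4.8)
The plan is to deduce the corollary from Theorem~\ref{main}, the only substantive point being that under the stated hypotheses the non-separable model spaces $l_2(\kappa)$, $\kappa\ge\mathfrak c$, occurring in item~(6) of that theorem all collapse to the single space $l_2(\mathfrak c)$. Items~(1)--(3) and~(5) transfer verbatim, and, as recorded in the introduction, $l_2(\mathfrak c)$ is homeomorphic to $l_\infty$ by the Toru\'nczyk Theorem, $l_\infty$ being an infinite-dimensional Banach space of density $\mathfrak c$. So everything reduces to pinning down the density of the components appearing in item~(6).

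For this I would invoke the cardinality estimate recorded just before the corollary: for a Banach space $X$ with the Kunen--Shelah property one has $|\ConvH(X)|\le|X^*|^\w$. Under the assumption $|X^*|=\mathfrak c$ the identity $\mathfrak c^\w=(2^\w)^\w=2^\w=\mathfrak c$ gives $|\ConvH(X)|\le\mathfrak c$, whence every component $\mathcal H$ satisfies $\dens(\mathcal H)\le|\mathcal H|\le\mathfrak c$. If $\mathcal H$ falls under item~(6), then by Theorem~\ref{main} it is homeomorphic to $l_2(\kappa)$ with $\kappa=\dens(\mathcal H)$ and $\kappa\ge\mathfrak c$; combined with the bound just obtained this forces $\kappa=\mathfrak c$, so $\mathcal H\cong l_2(\mathfrak c)\cong l_\infty$.

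It remains to match the descriptive conditions of the corollary with those of Theorem~\ref{main}; the only gap is item~(4), where ``finite codimension $\ge2$'' has been shortened to ``codimension $\ge2$''. To close it I would first note that a component contains at most one linear subspace: if $L,M$ are closed subspaces with $\dH(L,M)<\infty$, then from $L\subset M+r\IB$ one gets, for every $x\in L$ and $t>0$, that $x\in M+\tfrac{r}{t}\IB$, so $x\in M$ as $t\to\infty$; by symmetry $L=M$. Hence the hypotheses ``$\mathcal H$ contains a subspace of finite codimension $\ge2$'' and ``$\mathcal H$ contains a subspace of infinite codimension'' cannot hold simultaneously. Moreover a subspace $L$ of infinite codimension shares its component with no polyhedral set: the recession cone is invariant under finite Hausdorff distance, while the recession cone of a polyhedral set, when it is a subspace, is the intersection of finitely many hyperplanes and so has finite codimension. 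Therefore such $L$ lands in item~(6) and contributes $l_\infty$, whereas subspaces of finite codimension $\ge2$ contribute $Q\times\bar\IR_+$ through item~(4) of Theorem~\ref{main}, reconciling the two formulations.

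I do not anticipate a genuine obstacle, since the corollary is essentially a bookkeeping specialization of Theorem~\ref{main}. The one place that repays attention is the density computation, where one must combine the upper cardinality bound for $\ConvH(X)$ with the \emph{equality} $\kappa=\dens(\mathcal H)$ supplied by Theorem~\ref{main}, rather than the merely qualitative non-separability of these components.
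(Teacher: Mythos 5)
Your proposal is correct and follows exactly the route the paper intends: the paper gives no separate proof of this corollary, but the paragraph preceding it supplies precisely the bound $|\ConvH(X)|\le|X^*|^\w=\mathfrak c^\w=\mathfrak c$ that you use to force $\dens(\mathcal H)=\mathfrak c$ in case (6) of Theorem~\ref{main}, after which $l_2(\mathfrak c)$ is homeomorphic to $l_\infty$ by the Toru\'nczyk theorem. Your side remark on item (4) is also sound, though note that what it really shows is that the corollary's ``codimension $\ge 2$'' must still be read as ``finite codimension $\ge 2$'' --- a component whose only linear subspace has infinite codimension contains no polyhedral set and is homeomorphic to $l_\infty$ --- rather than the two formulations being literally equivalent.
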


Since the space $\ConvH(X)$ is homeomorphic to the topological sum of its connected components, we can use Corollary~\ref{cor1} to classify topologically the spaces $\ConvH(X)$ for separable Banach spaces $X$ (and more generally Banach spaces with the Kunen-Shelah property and $|X^*|\le\mathfrak c$). In the following corollary the cardinal $\mathfrak c$ is considered as a discrete topological space.

\begin{introcor}\label{cor2} For a separable Banach space $X$ \textup{(}more generally, a Banach space $X$ with the Kunen-Shelah property and $|X^*|\le\mathfrak c$\textup{)} the space $\ConvH(X)$ is homeomorphic to the topological sum:
\begin{enumerate}
\item $\{0\}\oplus \IR\oplus\IR\oplus\,(\IR\times\bar\IR_+)$ iff $\dim(X)=1$;
\item $\{0\}\oplus \,Q\times\bar\IR_+\,\oplus\;\mathfrak c\times(\IR\oplus\,\IR\times\bar\IR_+\,\oplus l_2\oplus l_\infty)$ iff $\dim(X)=2$;
\item $\{0\}\oplus\; \mathfrak c\times(\IR\oplus\,\IR\times\bar\IR_+\,\oplus \,Q\times\bar\IR_+\,\oplus l_2\oplus l_\infty)$ iff $\dim(X)\ge 3$.
\end{enumerate}
Moreover, under $2^{\w_1}>\mathfrak c$, for a Banach space $X$, the space $\ConvH(X)$ has cardinality $|\ConvH(X)|\le\mathfrak c$ if and only if $|X^*|\le\mathfrak c$ and the Banach space $X$ has the Kunen-Shelah property.
\end{introcor}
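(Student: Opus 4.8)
The plan is to deduce Corollary~\ref{cor2} from Corollary~\ref{cor1} by counting, for each homeomorphism type, how many connected components of $\ConvH(X)$ have that type, and then assembling the counts into a topological sum. Throughout I use that $\ConvH(X)$ is homeomorphic to the topological sum of its components, and that under the standing hypothesis (separability, or the Kunen--Shelah property with $|X^*|\le\mathfrak c$) one has
\[
|\ConvH(X)|\le|X^*|^\w\le\mathfrak c^\w=\mathfrak c,
\]
so there are at most $\mathfrak c$ components in all. By Corollary~\ref{cor1} each of them is of one of the six types $\{0\},\IR,\IR\times\bar\IR_+,Q\times\bar\IR_+,l_2,l_\infty$, the type being detected by the six listed alternatives. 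Two global facts I would record at once: the component of $X$ is the unique one of type $\{0\}$, and the displayed inequality already furnishes the upper bound $\mathfrak c$ for the number of components of every other type, so only lower bounds $\ge\mathfrak c$ remain to be produced.

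To manufacture $\mathfrak c$ distinct components of a given generic type I would exhibit representatives with pairwise distinct recession cones, using that closed convex sets lying at finite Hausdorff distance necessarily have equal recession cones (hence distinct recession cones force distinct components). For type $\IR$ take $\mathfrak c$ pairwise non-parallel closed half-spaces; for $\IR\times\bar\IR_+$ the $\mathfrak c$ distinct hyperplanes $\Ker f$; for $l_2$ the polyhedral wedges $\{f\le0\}\cap\{g\le0\}$ over $\mathfrak c$ linearly independent pairs $f,g\in X^*$ (each is polyhedral, and since its recession cone is a pointed cone that is neither a half-space cone nor a subspace, no member of its component is a half-space or a linear subspace, so by Corollary~\ref{cor1}(5) the component is of type $l_2$). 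Type $l_\infty$ is the most delicate: here I would use the over-parabolic regions $\{(s,t):t\ge cs^2\}$ ($c>0$) inside a fixed $2$-dimensional subspace, whose widths grow unboundedly, so that no two of them — and no polyhedral set — can stay within finite Hausdorff distance, whence each such component contains no polyhedral set and is of type $l_\infty$. All these constructions require only $\dim X\ge2$.

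The dimension-dependent bookkeeping is confined to two places. For $Q\times\bar\IR_+$, by Corollary~\ref{cor1}(4) a component is of this type exactly when it contains a subspace of finite codimension $\ge2$; when $\dim X=2$ the only such subspace is $\{0\}$, whose component is the single component of bounded sets, so the type occurs once, whereas for $\dim X\ge3$ (finite or infinite) the $\mathfrak c$ pairwise non-parallel subspaces of codimension $2$ give $\mathfrak c$ components. When $\dim X=1$, so $X\cong\IR$, every unbounded proper closed convex set is a closed ray: the half-spaces fall into exactly two components of type $\IR$, the bounded sets form one component of type $\IR\times\bar\IR_+$ (it contains $\{0\}$), and no component of type $Q\times\bar\IR_+$, $l_2$, or $l_\infty$ occurs. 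Feeding these counts into the topological-sum decomposition and simplifying via $2\times\IR=\IR\oplus\IR$ and $\mathfrak c\times A\oplus\mathfrak c\times B=\mathfrak c\times(A\oplus B)$ yields precisely the three normal forms (1)--(3).

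For the concluding ``moreover'' statement, the implication $(\Leftarrow)$ is immediate from the displayed inequality and needs no extra set theory. For $(\Rightarrow)$ assume $|\ConvH(X)|\le\mathfrak c$; then $|X^*|\le\mathfrak c$ follows from the injectivity of $f\mapsto H_f=f^{-1}\big((-\infty,1]\big)$ on $X^*\setminus\{0\}$. To obtain the Kunen--Shelah property I would argue by contraposition: if $X$ fails it, then by one of its equivalent forms in \cite{GJMMP} and \cite[8.19]{HMVZ} there is a convexly independent family $\{x_\alpha:\alpha<\w_1\}$, i.e. $x_\alpha\notin\oco\{x_\beta:\beta\ne\alpha\}$ for every $\alpha$; then $S\mapsto\oco\{x_\alpha:\alpha\in S\}$ is injective on the non-empty $S\subseteq\w_1$ (if $\alpha\in S\setminus S'$ then $x_\alpha$ belongs to the first hull but not the second), producing $2^{\w_1}$ distinct members of $\ConvH(X)$, which under $2^{\w_1}>\mathfrak c$ contradicts $|\ConvH(X)|\le\mathfrak c$. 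I expect the main obstacle to be exactly this last step — extracting from the mere failure of Kunen--Shelah an uncountable convexly independent system with pairwise distinct subfamily-hulls, which is precisely where the assumption $2^{\w_1}>\mathfrak c$ is used; by comparison the first part is bookkeeping over Corollary~\ref{cor1}, its only subtle point being the verification that the over-parabolic $l_\infty$-witnesses genuinely avoid every polyhedral set.
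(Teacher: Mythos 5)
Your proposal is correct and follows essentially the same route as the paper: items (1)--(3) are obtained by combining Corollary~\ref{cor1} with a count of the components of each topological type (which the paper dismisses as routine and you carry out explicitly, using distinct characteristic cones to separate components and the bound $|\ConvH(X)|\le|X^*|^{\w}\le\mathfrak c$ for the upper estimate), and your argument for the ``moreover'' clause --- the polar half-space injection for $|X^*|\le\mathfrak c$, and the $\w_1$-sequence from \cite[8.19]{HMVZ} yielding $2^{\w_1}>\mathfrak c$ distinct closed convex hulls when Kunen--Shelah fails --- is exactly the paper's. The only step the paper's machinery handles more cleanly than your direct computation is the verification that the parabolic witnesses for type $l_\infty$ meet no polyhedral set, which follows at once from Proposition~\ref{p4.7}(4) since their characteristic cone (a ray) does not lie in their component.
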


\begin{proof} The statements (1)--(3) easily follow from the classification of the components of $\Conv_H(X)$ given in Corollary~\ref{cor1} and a routine calculation of the number of components of a given topological type.

Now assume that $2^{\w_1}>\mathfrak c$. If $X$ is a Banach space with the Kunen-Shelah property and $|X^*|\le\mathfrak c$, then the definition of the Kunen-Shelah property yields the upper bound $$|\ConvH(X)|\le|X^*|^\w\le\mathfrak c^\w=\mathfrak c.$$

If $|\ConvH(X)|\le\mathfrak c$, then $|X^*|\le\mathfrak c$ as $|X^*|\le|\ConvH(X)|$ (because each functional $f\in X^*$ can be uniquely identified with its polar half-space $f^{-1}\big((-\infty,1]\big)\in\ConvH(X)$). Assuming that $X$ fails to have the Kunen-Shelah property and applying Theorem~8.19 of \cite{HMVZ} (see also \cite{GJMMP}), we can find a  sequence $\{x_\alpha\}_{\alpha<\w_1}\subset X$ such that for every $\alpha<\w_1$ the point $x_\alpha$ does not lie in the closed convex hull $C_{\w_1\setminus\{\alpha\}}$ of the set $\{x_\alpha\}_{\alpha\in\w_1\setminus\{\alpha\}}$. Now for every subset $A\subset\w_1$ consider the closed convex hull $C_A=\cconv\{x_\alpha\}_{\alpha\in A}$. We claim that $C_A\ne C_B$ for any distinct subsets $A,B\subset\w_1$. Indeed, if $A\ne B$ then the symmetric difference $(A\setminus B)\cup(B\setminus A)$ contains some ordinal $\alpha$. Without loss of generality, we can assume that $\alpha\in A\setminus B$. Then $x_\alpha\in C_A\setminus C_B$ as $C_B\subset C_{\w_1\setminus\{\alpha\}}\ni x_\alpha$. This implies that $\{C_A:A\subset \w_1\}$ is a subset of cardinality $2^{\w_1}>\mathfrak c$ in $\ConvH(X)$ and hence $|\ConvH(X)|\ge 2^{\w_1}>\mathfrak c$, which is a desired contradiction.
\end{proof}

Among the connected components of $\ConvH(X)$  there is a special
one, namely, the component $\mathcal H_0$ containing the singleton
$\{0\}$.
This component coincides with the space $\BConv_H(X)$ of
all non-empty bounded closed convex subsets of a Banach space $X$.
The spaces $\BConv_H(X)$ have been intensively studied both by topologists
\cite{NQS}, \cite{Sak} and analysts \cite{GSM}. In particular, S.Nadler, J.
Quinn and N.M.~Stavrakas \cite{NQS} proved that for a finite $n\ge
2$ the space $\BConv_H(\IR^n)$  is homeomorphic to
$Q\times\bar\IR_+$ while K.~Sakai  proved in \cite{Sak} that
for an infinite-dimensional Banach space $X$ the space $\HH_0=\BConv_H(X)$ is homeomorphic to a non-separable Hilbert space. Moreover, if $X$ is separable or reflexive, then $\dens(\HH_0)=2^{\dens(X)}$.
In the latter case the density $\dens^*(X^*)$ of the dual space $X^*$ in the weak$^*$ topology is equal to the density $\dens(X)$ of $X$. Banach spaces $X$ with $\dens^*(X^*)=\dens(X^*)$ are called DENS Banach spaces, see \cite[5.39]{HMVZ}. By Proposition 5.40 of \cite{HMVZ}, the class of DENS Banach spaces includes all weakly Linedl\"of determined spaces, and hence all weakly countably generated and all reflexive Banach spaces.

Applying Theorem~\ref{main} to describing the topology of the component  $\HH_0=\BConv_H(X)$, we obtain the following classification.

\begin{introcor}\label{cor3} The space $\HH_0=\BConv_H(X)$ of non-empty bounded closed convex subsets of a Banach space $X$ is homeomorphic to one of the spaces: $\{0\}$, $\IR\times\bar\IR_+$, $Q\times \bar\IR_+$ or the Hilbert space $l_2(\kappa)$ of density $\kappa\ge\mathfrak c$. More precisely, $\BConv(X)$ is homeomorphic to:
\begin{enumerate}
\item $\{0\}$ iff $\dim X=0$;
\item $\IR\times\IR^+$ iff $\dim X=1$;
\item $Q\times\bar\IR_+$ iff $2\le\dim(X)<\infty$;
\item $l_2(\kappa)$ for some cardinal $\kappa\in [2^{\dens^*(X^*)},2^{\dens(X)}]$ iff $\dim(X)=\infty$;
\item $l_2(2^{\dens(X)})$ if $X$ is an infinite-dimensional DENS Banach space.
\end{enumerate}
\end{introcor}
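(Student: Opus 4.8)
The plan is to derive Corollary~\ref{cor3} directly from the master classification in Theorem~\ref{main}, applied to the specific component $\HH_0=\BConv_H(X)$ containing the singleton $\{0\}$. The crucial structural observation is that $\HH_0$ is precisely the component of bounded closed convex sets: indeed, any bounded set lies at finite Hausdorff distance from $\{0\}$, while any set at finite Hausdorff distance from a bounded set is itself bounded, so by the local-connectedness criterion recorded in Remark~\ref{r4.6} (two sets share a component iff $\dH<+\infty$) the component of $\{0\}$ coincides with $\BConv_H(X)$. This identification lets me translate the six alternatives of Theorem~\ref{main} into conditions on $\dim X$, using the decisive fact that a \emph{bounded} component can never contain an unbounded set such as a half-space, a linear subspace, or an unbounded polyhedral set.

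The cases for finite dimension are then a matter of bookkeeping. First I would dispose of $\dim X=0$: here $X=\{0\}$ is itself bounded, so $\HH_0$ contains the whole space, placing us in case~(1) of Theorem~\ref{main}, giving $\HH_0\cong\{0\}$. For $\dim X=1$ the space $X$ is the line $\IR$, a linear subspace of codimension~$1$, and since $\IR$ is bounded only when... more carefully, $\HH_0$ here is the component of bounded sets, which for $X=\IR$ is exactly the component containing bounded closed intervals; this is case~(3) of Theorem~\ref{main}, so $\HH_0\cong\IR\times\bar\IR_+$. For $2\le\dim X<\infty$ the space $X$ is a linear subspace of itself of finite codimension $0$, but the relevant point is that the bounded component contains, e.g., a point together with all bounded polytopes, and by case~(4) of Theorem~\ref{main} (a component containing a linear subspace of finite codimension $\ge 2$, realized by passing to the finite-dimensional quotient structure) one gets $\HH_0\cong Q\times\bar\IR_+$; alternatively one cites directly the Nadler--Quinn--Stavrakas result \cite{NQS} already quoted in the text.

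For the infinite-dimensional case I would argue that $\HH_0$ contains no polyhedral convex subset: a polyhedral set in an infinite-dimensional Banach space, being an intersection of finitely many half-spaces, is necessarily unbounded (it contains a line or ray), hence cannot lie in the bounded component. Therefore case~(6) of Theorem~\ref{main} applies and $\HH_0\cong l_2(\kappa)$ for some $\kappa\ge\mathfrak c$. It remains to pin down $\kappa=\dens(\HH_0)$, and this is where I expect the real work to be. The upper bound $\kappa\le 2^{\dens(X)}$ follows because $\BConv_H(X)$ embeds into the power set of a dense subset via the supporting-functional/polar description, while the lower bound $\kappa\ge 2^{\dens^*(X^*)}$ comes from exhibiting $2^{\dens^*(X^*)}$ many mutually far-apart bounded convex bodies, indexed by subsets of a weak$^*$-dense family of functionals. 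When $\dens^*(X^*)=\dens(X)$ — the defining property of DENS spaces, which by the cited Proposition~5.40 of \cite{HMVZ} includes all reflexive and weakly Lindel\"of determined spaces — the two bounds coincide and force $\kappa=2^{\dens(X)}$, yielding statement~(5).

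The main obstacle, as flagged, is the density computation for general infinite-dimensional $X$: establishing the two-sided estimate $2^{\dens^*(X^*)}\le\dens(\HH_0)\le 2^{\dens(X)}$ requires a genuine construction of a large separated family of bounded convex sets (for the lower bound) together with a covering argument using a dense set of functionals (for the upper bound), and this should lean on the density results of Sakai \cite{Sak} quoted in the text rather than being reproved from scratch. The finite-dimensional cases, by contrast, are immediate once the identification $\HH_0=\BConv_H(X)$ is in place and the corresponding clauses of Theorem~\ref{main} are invoked.
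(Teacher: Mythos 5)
Your reduction to Theorem~\ref{main} is the same as the paper's, and the finite-dimensional bookkeeping is correct: $\{0\}$ is a linear subspace of codimension $\dim X$ lying in $\HH_0$, so clauses (1), (3), (4) of Theorem~\ref{main} dispose of $\dim X=0$, $\dim X=1$ and $2\le\dim X<\infty$; and your observation that a polyhedral set in an infinite-dimensional space contains a translate of the (infinite-dimensional) intersection of the kernels of its finitely many defining functionals, hence is unbounded and cannot lie in $\HH_0$, correctly places the infinite-dimensional case in clause (6). The upper bound $\dens(\HH_0)\le|\ConvH(X)|\le 2^{\dens(X)}$ is also essentially the paper's (the number of closed subsets of a space does not exceed $2^{w(X)}$).

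The gap is the lower bound $2^{\dens^*(X^*)}\le\dens(\HH_0)$, which you flag as ``the real work'' but do not carry out, and neither of your two pointers fills it. Sakai's result \cite{Sak}, as quoted in the introduction, gives $\dens(\HH_0)=2^{\dens(X)}$ only for separable or reflexive $X$; it does not yield the general estimate in terms of $\dens^*(X^*)$. And a ``weak$^*$-dense family of functionals'' by itself does not produce $2^{\dens^*(X^*)}$ uniformly separated bounded convex sets: the sets must be built from vectors, and one needs a quantitative certificate that distinct sets stay far apart. The paper's device is Plichko's theorem (\cite{Pli80}, Theorem~4.12 of \cite{HMVZ}): there is a bounded biorthogonal system $\{(x_\alpha,f_\alpha)\}_{\alpha<\kappa}$ of length $\kappa=\dens^*(X^*)$ with $L=\sup_\alpha\max\{\|x_\alpha\|,\|f_\alpha\|\}<\infty$. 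Putting $C_A=\cconv\{x_\alpha\}_{\alpha\in A}$ for $A\subset\kappa$, if $\alpha\in A\setminus B$ then $f_\alpha$ vanishes on $C_B$ while $f_\alpha(x_\alpha)=1$, so $\dist(x_\alpha,C_B)\ge 1/L$ and hence $\dH(C_A,C_B)\ge 1/L$; thus $\{C_A:A\subset\kappa\}$ is a $\tfrac1L$-separated family of cardinality $2^{\dens^*(X^*)}$ inside $\HH_0$, forcing $\dens(\HH_0)\ge 2^{\dens^*(X^*)}$. Without this construction (or an equivalent one) clause (4), and with it clause (5), remains unproved.
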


\begin{proof} This corollary will follow from Theorem~\ref{main} as soon as we check that $$2^{\dens^*(X^*)}\le\dens(\HH_0)\le |\HH_0|\le |\ConvH(X)|\le 2^{\dens(X)}$$
for each infinite-dimensional Banach space $X$.

In fact, the inequality $|\ConvH(X)|\le 2^{\dens(X)}$ has general-topological nature and follows from the known fact that the number of closed subsets (equal to the number of open subsets) of a topological space $X$ does not exceed $2^{w(X)}$.

To prove that $2^{\dens^*(X^*)}\le\dens(\HH_0)$ we shall use a result of Plichko \cite{Pli80} (see also Theorem~4.12 \cite{HMVZ}) saying that for each infinite-dimensional Banach space $X$ there is a bounded sequence $\{(x_\alpha,f_\alpha)\}_{\alpha<\kappa}\subset X\times X^*$ of length $\kappa=\dens^*(X^*)$, which is biorthogonal in the sense that $f_\alpha(x_\alpha)=1$ and $f_\alpha(x_\beta)=0$ for any distinct ordinals $\alpha,\beta<\kappa$. Let $L=\sup\{\|x_\alpha\|,\|f_\alpha\|:\alpha<\kappa\}$.

For every subset $A\subset\kappa$ consider the closed convex hull
$C_A=\cconv(\{x_\alpha\}_{\alpha\in A})$ of the set $\{x_\alpha\}_{\alpha\in A}$. We claim that for any distinct subsets $A,B\subset\kappa$ we get $\dH(C_A,C_B)\ge \frac1{L}$.
Indeed, since $A\ne B$ the symmetric difference $(A\setminus B)\cup(B\setminus A)$ contains some ordinal $\alpha$. Without loss of generality, we can assume that $\alpha\in A\setminus B$.
Then $C_B\subset f^{-1}(0)$ and hence for each $c\in C_B$ we get $\|x_\alpha-c\|\ge\frac{|f_\alpha(x_\alpha)-f_\alpha(c)|}{\|f_\alpha\|}\ge \frac{|1-0|}{L}$, which implies $\dist(x_\alpha,C_B)\ge\frac1{L}$ and hence $\dH(C_A,C_B)\ge\frac1L$ as $x_\alpha\in C_A$.

Now we see that $\C=\{C_A:A\subset\kappa\}$ is a closed discrete subspace in $\HH_0$ and hence  $\dens(\HH_0)\ge|\C|=2^\kappa=2^{\dens^*(X*)}$.
\end{proof}

Corollaries~\ref{cor1} and \ref{cor2} motivate the following problem.

\begin{problem} Is $|X^*|\le\mathfrak c$ for each Banach space $X$ with the Kunen-Shelah property?
\end{problem}

Another problem concerns possible densities of the components of the space $\ConvH(X)$.

\begin{problem}\label{pr1.2} Let $X$ be an infinite-dimensional Banach space.
Is it true that each component $\HH$ (in particular, the component $\HH_0$) of the space $\ConvH(X)$ has density $2^\kappa$ or $2^{<\kappa}=\sup\{2^\lambda:\lambda<\kappa\}$ for some cardinal $\kappa$?
\end{problem}
 Observe that under GCH (the Generalized Continuum Hypothesis) the answer to Problem~\ref{pr1.2} is trivially ``yes'' as under GCH all cardinals are of the form $2^{<\kappa}$ for some $\kappa$.
\medskip

\section{Hypermetric spaces}\label{s2}

Because the Hausdorff distance $\dH$ on $\ConvH(X)$ can
take the infinite value we should work with generalized metrics called
hypermetrics. The precise definition is as follows:

A {\em hypermetric} on a set $X$ is a function $d:X\times
X\to[0,\infty]$ satisfying the three axioms of a usual metric:
\begin{itemize}
\item  $d(x,y)=0$ iff $x=y$,
\item  $d(x,y)=d(y,x)$,
\item  $d(x,z)\le d(x,y)+d(y,z)$.
\end{itemize}
Here we extend the addition operation from $(-\infty,\infty)$ to $[-\infty,\infty]$ letting
$$\infty+\infty=\infty,\;-\infty+(-\infty)=-\infty,\;\infty+(-\infty)=-\infty+\infty=0$$and
$$x+\infty=\infty+x=\infty,\; x+(-\infty)=-\infty+x=-\infty$$ for every $x\in (-\infty,\infty)$.

A {\em hypermetric space} is a pair $(X,d)$ consisting of a set $X$ and a hypermetric $d$ on $X$.
It is clear that each metric is a hypermetric and hence each metric space is a hypermetric space.

In some respect, the notion of a hypermetric is more convenient
than the usual notion of a metric. In particular, for any family
$(X_i,d_i)$, $i\in\mathcal I$, of hypermetric spaces it is trivial
to define a nice hypermetric $d$ on the topological sum
$X=\oplus_{i\in\I}X_i$. Just let
$$d(x,y)=\begin{cases}
d_i(x,y),&\mbox{if $x,y\in X_i$};\\
\infty,&\mbox{otherwise}.
\end{cases}
$$
The obtained hypermetric space $(X,d)$ will be called the {\em direct sum} of the family of hypermetric spaces $(X,d_i)$, $i\in\I$.

In fact, each hypermetric space $(X,d)$ decomposes into the direct
sum of metric subspaces of $X$ called metric components of $X$.
More precisely, a {\em metric component} of $X$ is an equivalence
class of $X$ by the equivalence relation $\sim$ defined as $x\sim
y$ iff $d(x,y)<\infty$. So, the {\em metric component} of a point
$x\in X$ coincides with the set $\IB_{<\infty}(x)=\{x'\in
X:d(x,x')<\infty\}$. The restriction of the hypermetric $d$ to
each metric component is a metric. Therefore $X$ is the direct sum
of its metric components  and hence understanding the
(topological) structure of a hypermetric space reduces to studying
the metric (or topological) structure of its metric components.

A typical example of a hypermetric is the Hausdorff hypermetric $\dH$ on the space $\Cld(X)$ of non-empty closed subsets of a (linear) metric space $X$ (and the restriction of $\dH$ to the subspace $\Conv(X)\subset \Cld(X)$ of non-empty closed convex subsets of $X$). So both $\Cld_{\mathsf H}(X)=(\Cld(X),\dH)$ and $\ConvH(X)=(\Conv(X),\dH)$ are hypermetric spaces.

A much simple (but still important) example of a hypermetric space is the extended real line $\overline{\IR}=[-\infty,\infty]$ with the hypermetric
$$d(x,y)=
\begin{cases}
|x-y|,&\mbox{if $x,y\in (-\infty,\infty)$,}\\
0,&\mbox{if $x=y\in\{-\infty,\infty\}$,}\\
\infty,&\mbox{otherwise},
\end{cases}
$$
which will be denoted by $|x-y|$.
The hypermetric space $\overline{\IR}$ has three metric
components: $\{-\infty\}$, $\IR$, $\{\infty\}$.
\smallskip

This example allows us to construct another important example of a hypermetric space.
Namely, for a set $\Gamma$ consider the space $\overline \IR^\Gamma$ of functions from $\Gamma$ to $\overline\IR$ endowed with the hypermetric
$$d(f,g)=\|f-g\|_\infty=\sup_{\gamma\in\Gamma}|f(\gamma)-g(\gamma)|.$$
The obtained hypermetric space
$(\overline{\IR}^\Gamma,\|\cdot-\cdot\|_\infty)$ will be denoted
by $\bar l_\infty(\Gamma)$. Observe that the topology of $\bar
l_\infty(\Gamma)$ is different from the Tychonoff product topology
of $\overline{R}^\Gamma$. Another reason for using the notation
$\bar l_\infty(\Gamma)$ is that the metric component of $\bar
l_\infty(\Gamma)$ containing the zero function
 coincides with the classical Banach space $l_\infty(\Gamma)$ of bounded functions on $\Gamma$.
 More generally, for each $f_0\in\bar l_\infty(\Gamma)$ its metric component
$$\IB_{<\infty}(f_0)=\{f\in\bar l_\infty(\Gamma):\|f-f_0\|_\infty<\infty\}$$
is isometric to the Banach space $l_\infty(\Gamma_0)$ where $\Gamma_0=\{\gamma\in\Gamma:|f_0(\gamma)|<\infty\}$.

It turns out that for every normed space $X$ the space $\ConvH(X)$ nicely embeds into the hypermetric space $\bar l_\infty(\IS^*)$ where $$\IS^*=\{x^*\in X^*:\|x^*\|=1\}$$ stands for the unit sphere of the dual Banach space $X^*$.

Namely, consider the function
$$\delta:\ConvH(X)\mapsto \bar l_\infty(\IS^*),\;\delta:C\mapsto\delta_C$$
where $\delta_C(x^*)=\sup x^*(C)$ for $x^*\in\IS^*$. The function
$\delta$ will be called the {\em canonical representation} of
$\ConvH(X)$.

\begin{proposition}\label{p2.1} For every normed space $X$ the canonical representation $\delta:\ConvH(X)\to\bar l_\infty(\IS^*)$ is an isometric embedding.
\end{proposition}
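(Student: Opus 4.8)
The plan is to show that the map $\delta$ is distance-preserving, i.e. that
$$\dH(A,B)=\|\delta_A-\delta_B\|_\infty=\sup_{x^*\in\IS^*}\big|\delta_A(x^*)-\delta_B(x^*)\big|$$
for all $A,B\in\ConvH(X)$; since the Hausdorff hypermetric separates points, distance preservation will automatically yield injectivity, hence an isometric embedding. Throughout I write $\IB_X=\{x\in X:\|x\|\le1\}$ and note that, as $A,B$ are non-empty, each support function $\delta_C$ takes values in $(-\infty,\infty]$, so the only infinite value in play is $+\infty$ and the convention $|{+}\infty-({+}\infty)|=0$ on $\overline{\IR}$ applies exactly when both support functions are infinite at a given $x^*$. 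The two structural facts I will rely on are: (a) the Hahn--Banach separation theorem, in the form that every closed convex set is an intersection of closed half-spaces; and (b) the elementary identities $\delta_{A+B}=\delta_A+\delta_B$ and $\delta_{\overline S}=\delta_S$ for the support functional, together with $\delta_{r\IB_X}\equiv r$ on $\IS^*$.

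From (a) I would first extract the key comparison lemma: for closed convex sets $C,D$ one has $C\subseteq D$ if and only if $\delta_C\le\delta_D$ pointwise on $\IS^*$. The forward implication is immediate from the definition of $\sup$; for the converse, Hahn--Banach gives $D=\{x\in X:x^*(x)\le\delta_D(x^*)\ \forall x^*\in\IS^*\}$, and any $x\in C$ satisfies $x^*(x)\le\delta_C(x^*)\le\delta_D(x^*)$, whence $x\in D$. Next I would compute the support function of the closed $r$-neighbourhood $B_r:=\{x\in X:\dist(x,B)\le r\}$. This $B_r$ is closed and convex (a sublevel set of the continuous convex function $\dist(\cdot,B)$), coincides with $\overline{B+r\IB_X}$, and a direct estimate using $\|x^*\|=1$ gives $\delta_{B_r}=\delta_B+r$ on $\IS^*$; alternatively this follows from (b). The point of passing to $B_r$ rather than $B+r\IB_X$ is precisely that the algebraic sum need not be closed in infinite dimensions, whereas the neighbourhood is closed by construction and has the same support function.

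With these tools the computation assembles cleanly. For a finite $r\ge0$ one has $\sup_{a\in A}\dist(a,B)\le r$ iff $A\subseteq B_r$, which by the comparison lemma and the neighbourhood computation is equivalent to $\delta_A\le\delta_B+r$; symmetrically $\sup_{b\in B}\dist(b,A)\le r$ is equivalent to $\delta_B\le\delta_A+r$. Taking the maximum, $\dH(A,B)\le r$ holds iff $|\delta_A(x^*)-\delta_B(x^*)|\le r$ for every $x^*\in\IS^*$, i.e. iff $\|\delta_A-\delta_B\|_\infty\le r$. Since this equivalence holds for every finite $r\ge0$, the two quantities $\dH(A,B)$ and $\|\delta_A-\delta_B\|_\infty$ in $[0,\infty]$ have the same set of finite upper bounds and therefore coincide, which is the desired isometry.

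The main obstacle is the bookkeeping with the value $+\infty$: the classical support-function description of the Hausdorff distance is usually stated for bounded bodies, whereas here both $\delta_C$ and $\dH$ may be infinite, so I must verify that the inequality $\delta_A\le\delta_B+r$ with finite $r$ forces $\delta_A(x^*)$ and $\delta_B(x^*)$ to be infinite simultaneously, matching the hypermetric convention $|{+}\infty-({+}\infty)|=0$ on $\overline{\IR}$. This is settled already at the level of the comparison lemma, since an $x^*$ with $\delta_B(x^*)<\infty=\delta_A(x^*)$ would violate $\delta_A\le\delta_B+r$; once it is checked, the equivalence above is valid uniformly in $r$ and no separate treatment of unbounded sets is required.
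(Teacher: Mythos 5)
Your proof is correct, and it takes a genuinely different route from the paper's. The paper proves the two inequalities $\|\delta_A-\delta_B\|\le\dH(A,B)$ and $\|\delta_A-\delta_B\|\ge\dH(A,B)$ directly: the first by a pointwise estimate at each $x^*\in\IS^*$ (deriving a contradiction from $|\sup x^*(A)-\sup x^*(B)|>\dH(A,B)$), the second by splitting into the cases $\dH(A,B)=\infty$ and $\dH(A,B)<\infty$ and, in each, using Hahn--Banach to separate the ball $\bar B(a,R)$ around a far point $a\in A$ from the set $B$. You instead reduce the whole identity to an order-theoretic comparison lemma ($C\subseteq D$ iff $\delta_C\le\delta_D$, again via Hahn--Banach in the form ``a closed convex set is the intersection of its supporting half-spaces'') together with the support-function arithmetic $\delta_{B_r}=\delta_B+r$ for the closed $r$-neighbourhood $B_r=\overline{B+r\IB_X}$, and then compare the sets of finite upper bounds of the two quantities in $[0,\infty]$. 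The underlying separation argument is the same --- separating $\bar B(a,R)$ from $B$ when $\dist(a,B)>R$ is exactly the contrapositive of your comparison lemma applied to $A\not\subseteq B_R$ --- but your packaging buys a cleaner, uniform treatment of the value $+\infty$ (no case split on whether $\dH(A,B)$ is finite), at the cost of having to verify the auxiliary facts that $B_r$ is closed convex, that $B_r=\overline{B+r\IB_X}$, and that $\delta_{\overline S}=\delta_S$ and $\delta_{A+B}=\delta_A+\delta_B$; all of these checks are present or correctly sketched in your write-up, and your observation that $\delta_B(x^*)<\infty=\delta_A(x^*)$ already violates $\delta_A\le\delta_B+r$ is precisely what reconciles the inequality formulation with the hypermetric convention $|\infty-\infty|=0$.
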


\begin{proof} Let $A,B\in\ConvH(X)$ be two convex sets. We should prove that $\dH(A,B)=\|\delta_A-\delta_B\|$, where $$\|\delta_A-\delta_B\|=\sup_{x^*\in\IS^*}|\delta_A(x^*)-\delta_B(x^*)|=\sup_{x^*\in\IS^*}|\sup x^*(A)-\sup x^*(B)|.$$

The inequality $\|\delta_A-\delta_B\|\le \dH(A,B)$ will follow as soon as we check that
$|\sup x^*(A)-\sup x^*(B)|\le \dH(A,B)$ for each functional $x^*\in\IS^*$. This is trivial if $\dH(A,B)=\infty$.  So we assume that $\dH(A,B)<\infty$.
To obtain a contradiction, assume that $|\sup x^*(A)-\sup x^*(B)|> \dH(A,B)$. Then either $\sup x^*(A)-\sup x^*(B)> \dH(A,B)$ or
$\sup x^*(B)-\sup x^*(A)> \dH(A,B)$. In the first case
$\sup x^*(B)\ne\infty$, so we can find a point $a\in A$ with $x^*(a)-\sup x^*(B)> \dH(A,B)$. It follows from the definition of
the Hausdorff metric $\dH(A,B)\ge \dist(a,B)$ that $\|a-b\|<x^*(a)-\sup x^*(B)$ for some point $b\in B$. Then $x^*(a)-x^*(b)\le \|x^*\|\cdot\|a-b\|<x^*(a)-\sup x^*(B)$ and hence $x^*(b)>\sup x^*(B)$, which is a contradiction.

By analogy, we can derive a contradiction from the assumption $\sup x^*(B)-\sup x^*(A)> \dH(A,B)$ and thus prove the inequality
$\|\delta_A-\delta_B\|\le \dH(A,B)$.
\smallskip

To prove the reverse inequality $\|\delta_A-\delta_B\|\ge \dH(A,B)$ let us consider two cases:

(i) $\dH(A,B)=\infty$. To prove that $\infty=\|\delta_A-\delta_B\|$, it suffices given any number $R<\infty$ to find a linear functional $x^*\in\IS^*$ such that $|\sup x^*(A)-\sup x^*(B)|\ge R$.

The equality $\dH(A,B)=\infty$ implies that either $\sup_{a\in A}\dist(a,B)=\infty$ or $\sup_{b\in B}\dist(b,A)=\infty$. In the first case we can find a point $a\in A$ with $\dist(a,B)\ge R$ and using the Hahn-Banach Theorem construct a linear functional $x^*\in \IS^*$ that separates the convex set $B$ from the closed $R$-ball $\bar B(a,R)=\{x\in X:\|x-a\|\le R\}$ in the sense that $\sup x^*(B)\le \inf x^*(\bar B(a,R))$. For this functional $x^*$ we get $\sup x^*(A)\ge x^*(a)\ge R+\inf x^*(\bar B(a,R))\ge R+\sup x^*(B)$ and thus $\sup x^*(A)-\sup x^*(B)\ge R$.

In the second case, we can repeat the preceding argument to find a linear functional $x^*\in \IS^*$ with $$|\sup x^*(A)-\sup x^*(B)|\ge \sup x^*(B)-\sup x^*(X)\ge R.$$

(ii) $\dH(A,B)<\infty$. To prove that $\dH(A,B)\ge \|\delta_A-\delta_B\|$ it suffices given any number $\e>0$ to find a linear functional $x^*\in\IS^*$ such that $|\sup x^*(A)-\sup x^*(B)|\ge \dH(A,B)-\e$. It follows from the definition of $\dH(A,B)$ that either there is a point $a\in A$ with $\dist(a,B)>\dH(A,B)-\e$ or else there is a point $b\in B$ with $\dist(b,A)>\dH(A,B)-\e$. In the first case we an use the Hahn-Banach Theorem to find a linear functional $x^*\in \IS^*$ which separates the convex set $B$ from the closed  $R$-ball $\bar B(a,R)$ where $R=\dH(A,B)-\e$ in the sense that  $\sup x^*(B)\le \inf x^*(\bar B(a,R)$. Then
$$\sup x^*(B)\le\inf x^*(\bar B(a,R))=x^*(a)-R\le\sup x^*(A)-R$$ and hence $$|\sup x^*(A)-\sup x^*(B)|\ge \sup x^*(A)-\sup x^*(B)\ge R=\dH(A,B)-\e.$$
The second case can be considered by analogy.
\end{proof}

\section{Assigning cones to components of $\ConvH(X)$}\label{s3}

In this section to each convex set $C$ of a normed space $X$ we assign two cones: the characteristic cone  $V_C\subset X$ and the dual characteristic cone $V_C^*\subset X^*$.

We recall that a subset $V$ of a linear space $L$ is called a {\em convex cone} if  $ax + by \in V$ for any points $x, y \in W$ and non-negative real numbers $a, b\in[0,+\infty)$.

For a convex subset $C$ of a normed space $X$ its {\em characteristic cone} of $C$  is the convex cone
$$V_C=\{v\in X:\forall c\in C,\;\;c+\bar\IR_+v\subset c\}$$lying in the normed space $X$, and its {\em dual characteristic cone} $V^*_C$ is a closed convex cone
$$V_C^*=\{x^*\in X^*:\sup x^*(C)<\infty\}$$which lies in the dual Banach space $X^*$.

It turns out that the characteristic cone $V_C$ of a convex set $C$ is uniquely determined by its dual characteristic cone $V^*_C$.

\begin{lemma}\label{l3.1} For any non-empty closed convex set $C$ in a normed space $X$ we get
$$V_C=\bigcap_{f\in V^*_C}f^{-1}\big((-\infty,0]\big).$$
\end{lemma}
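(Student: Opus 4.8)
The plan is to prove the two inclusions $V_C\subseteq W$ and $W\subseteq V_C$ separately, where for brevity I write $W=\bigcap_{f\in V^*_C}f^{-1}\big((-\infty,0]\big)$.

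For the inclusion $V_C\subseteq W$ I would argue directly. Fix $v\in V_C$ and an arbitrary $f\in V^*_C$; I must check that $f(v)\le 0$. By definition of $V^*_C$ the number $M=\sup f(C)$ is finite, and by definition of $V_C$ we have $c+tv\in C$ for every $c\in C$ and every $t\ge 0$. Hence $f(c)+t\,f(v)=f(c+tv)\le M$ for all $t\ge 0$. Were $f(v)>0$, the left-hand side would tend to $+\infty$ as $t\to\infty$, contradicting the bound $M<\infty$; therefore $f(v)\le 0$. Since $f\in V^*_C$ was arbitrary, $v\in W$.

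For the reverse inclusion $W\subseteq V_C$ I would argue by contraposition, and this is where the real content lies. Suppose $v\notin V_C$. Then there are a point $c_0\in C$ and a scalar $t_0>0$ with $p:=c_0+t_0v\notin C$. Since $C$ is a non-empty closed convex set and $\{p\}$ is a disjoint compact convex set, the Hahn--Banach separation theorem supplies a functional $g\in X^*$ and a real $\alpha$ with $g(c)\le\alpha<g(p)$ for all $c\in C$. In particular $\sup g(C)\le\alpha<\infty$, so $g\in V^*_C$. On the other hand, using $c_0\in C$ we get $g(c_0)+t_0\,g(v)=g(p)>\alpha\ge g(c_0)$, whence $t_0\,g(v)>0$ and so $g(v)>0$. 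Thus $g$ is a member of $V^*_C$ with $g(v)>0$, which means $v\notin g^{-1}\big((-\infty,0]\big)$ and hence $v\notin W$. This proves $W\subseteq V_C$ and completes the argument.

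The only delicate point is this second inclusion, and specifically the verification that the separating functional $g$ actually belongs to the dual characteristic cone $V^*_C$. This is automatic from \emph{strong} separation of a point from a closed convex set: the separating constant $\alpha$ is precisely an upper bound for $g$ on $C$, so $\sup g(C)<\infty$ and $g\in V^*_C$. Everything else is a one-line computation using the linearity of $g$ along the ray $t\mapsto c_0+tv$. Note that no completeness of $X$ is needed here; the normed (hence locally convex) structure suffices for the separation step.
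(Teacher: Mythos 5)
Your proof is correct and follows essentially the same route as the paper's: the direct ray argument for $V_C\subseteq W$, and Hahn--Banach separation of the point $c_0+t_0v\notin C$ from the closed convex set $C$ for the reverse inclusion. The one place you are more explicit than the paper --- verifying that the separating functional actually belongs to $V^*_C$ because the separating constant bounds it on $C$ --- is exactly the step the paper leaves implicit, and spelling it out is a minor but genuine improvement in exposition.
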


\begin{proof} Fix any vector $v\in V_C$ and a functional $f\in V^*_C$. Observe that for each  number $t\in\bar\IR_+$, we get $c+tv\in C$ and hence $f(c)+tf(v)\le\sup f(C)<\infty$, which implies that $f(v)\le 0$. This proves the inclusion $V_C\subset\bigcap_{f\in V^*_C}f^{-1}\big((-\infty,0]\big).$

To prove the reverse inclusion, fix any vector $v\in X\setminus V_C$. Then for some point $c\in C$ and some positive real number $t$ we get $c+tv\notin C$. Using the Hahn-Banach Theorem, find a functional $f\in X^*$ that separates the convex set $C$ and the point $x=c+tv$ in the sense that $\sup f(C)<f(c+tv)$. Then $f(c)\le\sup f(C)<f(c)+tf(v)$ implies that $f(v)>0$ and $v\notin f^{-1}\big((-\infty,0]\big)$.
\end{proof}

Let $X$ be a normed space. It is easy to see that for each component $\HH$ of the space $\ConvH(X)$
and any two convex sets $A,B\in\HH$ we get $C^*_A=C^*_B$. In this case Lemma~\ref{l3.1} implies that  $C_A=C_B$ as well. This allows us to define the {\em characteristic cone} $V_\HH$ and the {\em dual characteristic cone} $V^*_\HH$ of the component $\HH$ letting $V_\HH=V_C$ and $V^*_\HH=V^*_C$ for any convex set $C\in\HH$. Lemma~\ref{l3.1} guarantees that
$$V_\HH=\bigcap_{f\in V^*_\HH}f^{-1}\big((-\infty,0]\big),$$
so the characteristic cone $V_\HH$ of $\HH$ is uniquely determined by its dual characteristic cone $V^*_\HH$.

\section{The algebraical structure of $\ConvH(X)$}\label{s4}

In this section given a normed space $X$ we study the algebraic properties of the canonical representation $\delta:\ConvH(X)\to \bar l_\infty(\IS^*)$.

Note that the space $\ConvH(X)$ has a rich algebraic structure: it possesses three interrelated algebraic operations: the
multiplication by a real number, the addition, and taking the maximum. More precisely, for a real number $t\in\IR$, and convex sets $A,B\in\ConvH(X)$ let
\begin{itemize}
\item[ ] $t\cdot A=\{ta:a\in A\}$;
\item[ ] $A\oplus B=\overline{A+B}$;
\item[ ] $\max\{A,B\}=\oco(A\cup B)$, where
\item[ ] $\oco(Y)$ stands for the closed convex hull of a subset $Y\subset X$.
\end{itemize}
\smallskip

The hypermetric space $\overline\IR$ also has the corresponding
three operations (multiplication by a real number, addition and
taking maximum) which induces the tree operations on $\bar
l_\infty(\Gamma)=\overline{\IR}^\Gamma$.

\begin{proposition}\label{p4.1} The canonical representation
$\delta:\ConvH(X)\to\bar l_\infty(\IS^*)$ has the following properties:
\begin{enumerate}
\item $\delta(A\oplus B)=\delta(A)+\delta(B)$;
\item $\delta(\max\{A,B\})=\max\{\delta(A),\delta(B)\}$;
\item $\delta(rA)=r\delta(A)$;
\end{enumerate}
for every non-negative real number $r$ and convex sets
$A,B\in\ConvH(X)$.
\end{proposition}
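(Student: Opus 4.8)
The plan is to verify the three identities pointwise: for each functional $x^*\in\IS^*$ I will evaluate both sides at $x^*$ directly from the definition $\delta_C(x^*)=\sup x^*(C)$. The whole argument rests on two elementary observations about the supremum of a continuous linear functional over a set. First, since each $x^*\in\IS^*$ is continuous, for any subset $S\subset X$ one has $\sup x^*(\overline S)=\sup x^*(S)$, because $x^*(\overline S)\subset\overline{x^*(S)}$ gives ``$\le$'' while $S\subset\overline S$ gives ``$\ge$''. Second, since $x^*$ is linear, $\sup x^*(\conv(S))=\sup x^*(S)$, as any convex combination satisfies $x^*\big(\sum\lambda_i s_i\big)=\sum\lambda_i x^*(s_i)\le\sup x^*(S)$. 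Combining these yields $\sup x^*(\oco(S))=\sup x^*(S)$ for every $S\subset X$. Throughout I will use that each $C\in\ConvH(X)$ is non-empty, so $\delta_C(x^*)=\sup x^*(C)\in(-\infty,+\infty]$ never equals $-\infty$; this lets me sidestep the degenerate cases of extended addition on $\overline{\IR}$.

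For (1), applying the closure reduction to $A\oplus B=\overline{A+B}$ gives $\delta_{A\oplus B}(x^*)=\sup x^*(A+B)$. Then $\sup x^*(A+B)=\sup_{a\in A,\,b\in B}\big(x^*(a)+x^*(b)\big)=\sup_{a\in A}x^*(a)+\sup_{b\in B}x^*(b)=\delta_A(x^*)+\delta_B(x^*)$, where the middle equality is the standard splitting of a supremum of a sum over a product set. The value $+\infty$ causes no trouble: if, say, $\delta_A(x^*)=+\infty$, then choosing $a_n\in A$ with $x^*(a_n)\to\infty$ and fixing any $b\in B$ shows $\sup x^*(A+B)=+\infty$, in agreement with the extended sum $+\infty+\delta_B(x^*)$ since $\delta_B(x^*)>-\infty$.

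For (2), the closed-convex-hull reduction gives $\delta_{\max\{A,B\}}(x^*)=\sup x^*(\oco(A\cup B))=\sup x^*(A\cup B)=\max\{\sup x^*(A),\sup x^*(B)\}=\max\{\delta_A(x^*),\delta_B(x^*)\}$, which is exactly the pointwise maximum defining $\max\{\delta(A),\delta(B)\}$ in $\bar l_\infty(\IS^*)$. For (3) with $r>0$, scaling by a positive constant is order-preserving and commutes with the supremum even at $+\infty$, so $\delta_{rA}(x^*)=\sup\{r\,x^*(a):a\in A\}=r\sup x^*(A)=r\,\delta_A(x^*)$. The case $r=0$ is treated separately: $0\cdot A=\{0\}$ forces $\delta_{0\cdot A}(x^*)=0=0\cdot\delta_A(x^*)$ under the convention $0\cdot(+\infty)=0$ used to define scalar multiplication on $\bar l_\infty(\IS^*)$.

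The computations are all routine; the only point requiring genuine care — and the closest thing to an obstacle — is the bookkeeping with the value $+\infty$ and the extended arithmetic on $\overline{\IR}$. The key simplification, worth recording at the outset, is that non-emptiness of the sets keeps every $\delta_C(x^*)$ strictly above $-\infty$, so none of the truly ambiguous cases of extended addition (such as $\infty+(-\infty)$) ever arise.
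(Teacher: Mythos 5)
Your proof is correct and follows essentially the same route as the paper's: both reduce each identity to the pointwise equalities $\sup x^*(\overline{A+B})=\sup x^*(A)+\sup x^*(B)$, $\sup x^*(\oco(A\cup B))=\max\{\sup x^*(A),\sup x^*(B)\}$, and $\sup x^*(rA)=r\sup x^*(A)$, which the paper states as obvious. Your additional care with the extended arithmetic on $\overline{\IR}$ (noting that non-emptiness rules out the value $-\infty$) is a useful elaboration of details the paper leaves implicit, but it is not a different argument.
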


\begin{proof} The three items of the proposition follow from the three obvious equalities
$$
\begin{aligned}
&\sup x^*(A\oplus B)=x^*(A+B)=x^*(A)+x^*(B),\\
&\sup x^*(\oco(A\cup B))=\sup x^*(A\cup B)=\max\{\sup x^*(A),\sup x^*(B)\},\\
&\sup x^*(rA)=r\sup x^*(A).
\end{aligned}
$$
holding for every functional $x^*\in X^*$.
\end{proof}

\begin{remark} Easy examples show that the last item of Proposition~\ref{p4.1} does not hold for negative real numbers $r$. This means that the operator \mbox{$\delta:\ConvH(X)\to\bar l_\infty(\IS^*)$} is positively homogeneous but not homogeneous.
\end{remark}

The operations of addition and multiplication by a real number
allow us to define another important operation on $\ConvH(X)$
preserved by the canonical representation $\delta$, namely the
{\em Minkovski operation}
$$\mu:\ConvH(X)\times\ConvH(X)\times[0,1]\to\ConvH(X),\;\mu:(A,B,t)\mapsto (1-t)A\oplus tB$$
of producing a convex combination.
Proposition~\ref{p4.1} implies that the canonical representation $\delta$ is {\em affine} in the sense that $$\delta\big((1-t)A\oplus tB\big)=(1-t)\delta(A)+t\delta(B)$$ for every $A,B\in\ConvH(X)$ and $t\in[0,1]$.

Propositions~\ref{p2.1} and \ref{p4.1} will help us to establish the metric properties of the algebraic operations on $\ConvH(X)$.

\begin{proposition}\label{p4.2} Let $A,B,C,A',B'\in\ConvH(X)$ be five convex sets and $r\in\IR$ and $t,t'\in[0,1]$ be three real numbers.
Then
\begin{enumerate}
\item $\dH(A\oplus B,A'\oplus B')\le \dH(A,A')+\dH(B,B')$;
\item $\dH(A\oplus B,A\oplus C)=\dH(B,C)$ provided $V^*_A\supset V^*_B\cup V^*_C$;
\item $\dH(\max\{A,B\},\max\{A',B'\})\le \max\{\dH(A,A'),\dH(B,B')\}$;
\item $\dH(r\cdot A,r\cdot B)=|r|\cdot \dH(A,B)$;
\item $\dH((1-t)A\oplus tB,(1-t')A\oplus t'B)=|t-t'|\dH(A,B)$.
\end{enumerate}
\end{proposition}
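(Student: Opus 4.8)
The plan is to push everything through the canonical representation $\delta\colon\ConvH(X)\to\bar l_\infty(\IS^*)$. By Proposition~\ref{p2.1} the map $\delta$ is an isometric embedding, so $\dH(A,B)=\|\delta_A-\delta_B\|$ for all $A,B$; and by Proposition~\ref{p4.1} it converts $\oplus$, $\max$ and non-negative scaling into the pointwise addition, pointwise maximum and pointwise scaling of extended-real functions on $\IS^*$. Hence each item should reduce, after evaluating at a fixed $x^*\in\IS^*$ and then taking the supremum over $x^*$, to an elementary statement about the hypermetric on $\bar\IR$; the only genuine care needed is the bookkeeping of the value $+\infty$, which $\delta_A(x^*)=\sup x^*(A)$ attains precisely when $x^*\notin V^*_A$. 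I would dispatch the ``soft'' items first. For (1) the inequality is just the triangle inequality for the sup-hypermetric applied to $\delta_{A\oplus B}-\delta_{A'\oplus B'}=(\delta_A-\delta_{A'})+(\delta_B-\delta_{B'})$. For (3) it suffices to verify the pointwise estimate $|\max\{a,b\}-\max\{a',b'\}|\le\max\{|a-a'|,|b-b'|\}$ for extended reals (the $1$-Lipschitz property of $\max$ in each coordinate, which also behaves correctly at $+\infty$), and then to supremize.

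For (4), because Proposition~\ref{p4.1}(3) fails for negative scalars, I would avoid $\delta$ and argue straight from the Hausdorff metric: norm homogeneity gives $\|ra-rb\|=|r|\,\|a-b\|$, hence $\dist(ra,rB)=|r|\dist(a,B)$ for every $a\in A$ and the symmetric statement with $A,B$ exchanged, so $\dH(rA,rB)=|r|\dH(A,B)$ at once for all real $r$ (the degenerate case $r=0$ being covered by the convention $0\cdot\infty=0$). For (5) I would use that $\delta$ is affine, so that at each $x^*$ the two relevant values are $(1-t)\delta_A(x^*)+t\delta_B(x^*)$ and $(1-t')\delta_A(x^*)+t'\delta_B(x^*)$. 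Within a single metric component one has $\dH(A,B)<\infty$, so $\delta_A(x^*)$ and $\delta_B(x^*)$ are finite or equal to $+\infty$ simultaneously; where both are finite the difference telescopes to $(t-t')\bigl(\delta_B(x^*)-\delta_A(x^*)\bigr)$, of modulus $|t-t'|\,|\delta_B(x^*)-\delta_A(x^*)|$, and where both equal $+\infty$ both convex combinations equal $+\infty$ and contribute $0$ on each side. Taking suprema yields the claimed equality. I would note that this finite-distance reading is the intended one, since the Minkowski path stays within one component.

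The crux is item (2), where the hypothesis $V^*_A\supset V^*_B\cup V^*_C$ does real work. After transporting, I must compare $\delta_A+\delta_B$ with $\delta_A+\delta_C$ pointwise. Where $\delta_A(x^*)$ is finite the common summand cancels and the contribution is exactly the hypermetric distance between $\delta_B(x^*)$ and $\delta_C(x^*)$. The delicate points are those $x^*$ with $\delta_A(x^*)=+\infty$, i.e. $x^*\notin V^*_A$; here the hypothesis forces $x^*\notin V^*_B$ and $x^*\notin V^*_C$, so $\delta_B(x^*)=\delta_C(x^*)=+\infty$, whence both $\delta_{A\oplus B}(x^*)$ and $\delta_{A\oplus C}(x^*)$ equal $+\infty$ and contribute $d(+\infty,+\infty)=0$ on the left, matching the contribution $d(+\infty,+\infty)=0$ to $\dH(B,C)$ on the right. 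Thus the pointwise contributions agree at every $x^*$, and supremizing gives $\dH(A\oplus B,A\oplus C)=\dH(B,C)$.

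I expect this last step to be the main obstacle, precisely because the cone inclusion is exactly what is needed: without it one could have $\delta_A(x^*)=+\infty$ while $\delta_B(x^*)\ne\delta_C(x^*)$ are finite, so that the left-hand contribution would collapse to $d(+\infty,+\infty)=0$ while $\dH(B,C)$ retains a genuine positive (or infinite) contribution at $x^*$, breaking the equality. Verifying that the stated hypothesis is exactly strong enough to rule this out — that $A$ ``dominates'' $B$ and $C$ in every direction in which either of them is unbounded — is the heart of the argument, and the rest is routine transport along $\delta$.
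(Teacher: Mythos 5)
Your proof is correct and follows the same route as the paper, whose one-line argument simply transports all five items through the isometric embedding $\delta$ of Proposition~\ref{p2.1} and the algebraic identities of Proposition~\ref{p4.1}, leaving the pointwise verifications in $\bar l_\infty(\IS^*)$ to the reader. Your extra care with item (4) for negative $r$ (where $\delta$ cannot be used, since it is only positively homogeneous) and with the bookkeeping of infinite values in items (2) and (5) supplies exactly the details the paper omits.
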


\begin{proof} All the items easily follow from Propositions~\ref{p2.1}, \ref{p4.1}, and metric properties of algebraic operations on the hypermetric space $\bar l_\infty(\IS^*)$.
\end{proof}

Observe that the metric components of the hypermetric space $\bar l_\infty(\IS^*)$ are closed with respect to taking maximum and producing a convex combination. Moreover those operations are continuous on metric components of $\bar l_\infty(\IS^*)$.
With help of the canonical representation those properties of $\bar l_\infty(\IS^*)$ transform into the corresponding properties of $\ConvH(X)$. In such a way we obtain

\begin{corollary}\label{c4.3} Each metric component $\mathcal H$ of $\ConvH(X)$ is closed
under the operations of taking maximum and producing a convex combination.
Moreover those operations are continuous on $\mathcal H$.
\end{corollary}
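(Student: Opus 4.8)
The plan is to deduce everything from the Lipschitz-type estimates collected in Proposition~\ref{p4.2}, working inside a fixed metric component $\HH=\IB_{<\infty}(A_0)$, where by definition any two members are at finite Hausdorff distance. This finiteness is exactly what makes the estimates of Proposition~\ref{p4.2} applicable, since it rules out the indeterminate expressions that the extended arithmetic on $\bar l_\infty(\IS^*)$ would otherwise produce.

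First I would settle closedness. For the maximum, note that $\max\{A,A\}=\oco(A\cup A)=A$, so item~(3) of Proposition~\ref{p4.2} gives $\dH(\max\{A,B\},A)=\dH(\max\{A,B\},\max\{A,A\})\le\max\{\dH(A,A),\dH(B,A)\}=\dH(A,B)<\infty$ for $A,B\in\HH$; hence $\max\{A,B\}\in\HH$. For a convex combination I would specialize item~(5) with $t'=0$. Here one first records the degenerate identities $0\cdot B=\{0\}$ and $A\oplus\{0\}=\overline{A+\{0\}}=A$, so that $(1-0)A\oplus 0\cdot B=A$ and item~(5) yields $\dH((1-t)A\oplus tB,A)=t\,\dH(A,B)<\infty$. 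Thus $(1-t)A\oplus tB\in\HH$ as well.

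Next I would prove continuity, which amounts to showing that the two operations are (locally) Lipschitz on $\HH$ and on $\HH\times\HH\times[0,1]$. The maximum is immediate: item~(3) says that $(A,B)\mapsto\max\{A,B\}$ is $1$-Lipschitz for the max-metric on $\HH\times\HH$, hence continuous. For the Minkowski operation $\mu(A,B,t)=(1-t)A\oplus tB$ I would split the estimate by a triangle inequality through the intermediate point $(1-t')A\oplus t'B$: item~(5) controls the change in $t$ by $|t-t'|\,\dH(A,B)$, while items~(1) and~(4) control the change in the set arguments, giving
$$\dH(\mu(A,B,t),\mu(A',B',t'))\le|t-t'|\,\dH(A,B)+(1-t')\dH(A,A')+t'\dH(B,B').$$
Since all three summands tend to $0$ as $(A',B',t')\to(A,B,t)$, joint continuity of $\mu$ follows.

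The only point requiring care — the ``hard part,'' though it is mild — is the joint continuity of $\mu$: unlike the maximum it mixes three variables, so one must isolate the $t$-variation from the set-variation before applying Proposition~\ref{p4.2}, and one must keep $\dH(A,B)$ finite (guaranteed because $A,B$ lie in a single metric component) so that the coefficient $|t-t'|$ multiplies a finite quantity. Everything else is a direct transcription of the finitary estimates of Proposition~\ref{p4.2}, which in turn rest on the isometric and affine nature of the canonical representation $\delta$ established in Propositions~\ref{p2.1} and~\ref{p4.1}.
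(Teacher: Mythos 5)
Your proof is correct and follows essentially the same route as the paper: the paper transfers closedness and continuity from the metric components of $\bar l_\infty(\IS^*)$ via the canonical representation $\delta$, and the estimates of Proposition~\ref{p4.2} that you invoke are exactly the quantitative form of that transfer. You merely make explicit the Lipschitz bounds (including the joint-continuity estimate for the Minkowski operation) that the paper leaves implicit.
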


\begin{corollary}\label{c4.4} Each metric component $\mathcal H$ of $\ConvH(X)$ is isometric to a convex max-subsemilattice of the Banach lattice $l_\infty(\IS^*)$.
\end{corollary}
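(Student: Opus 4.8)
The plan is to transport all the earlier work through a single translation. Fix any $C_0\in\HH$ and set $f_0=\delta_{C_0}\in\bar l_\infty(\IS^*)$. Since $\HH$ is a metric component of $\ConvH(X)$, each $C\in\HH$ satisfies $\dH(C,C_0)<\infty$, so Proposition~\ref{p2.1} gives $\|\delta_C-f_0\|_\infty<\infty$; thus $\delta(\HH)$ lies inside the single metric component $\IB_{<\infty}(f_0)$ of the hypermetric space $\bar l_\infty(\IS^*)$. As observed in Section~\ref{s2}, this metric component is isometric, via the translation $g\mapsto g-f_0$, to the Banach space $l_\infty(\Gamma_0)\subseteq l_\infty(\IS^*)$, where $\Gamma_0=\{x^*\in\IS^*:|f_0(x^*)|<\infty\}=V^*_\HH\cap\IS^*$. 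I would therefore define $\Phi\colon\HH\to l_\infty(\IS^*)$ by $\Phi(C)=\delta_C-f_0$ and prove that $\Phi$ is an isometry of $\HH$ onto a convex max-subsemilattice of $l_\infty(\IS^*)$.

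First I would check that $\Phi$ really takes values in $l_\infty(\IS^*)$. On $\Gamma_0$ both $\delta_C(x^*)$ and $f_0(x^*)$ are finite, with $|\delta_C(x^*)-f_0(x^*)|\le\dH(C,C_0)$; off $\Gamma_0$ the two functions are simultaneously $+\infty$, because $C$ and $C_0$ share the same dual characteristic cone $V^*_C=V^*_{C_0}=V^*_\HH$ (Section~\ref{s3}), so $\delta_C(x^*)=\infty\Leftrightarrow x^*\notin V^*_\HH\Leftrightarrow f_0(x^*)=\infty$; the extended-arithmetic convention $\infty+(-\infty)=0$ then makes $\Phi(C)(x^*)=0$ there. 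Hence $\Phi(C)$ is a genuine bounded real function on $\IS^*$. That $\Phi$ is isometric is immediate from Proposition~\ref{p2.1}: $\|\Phi(A)-\Phi(B)\|_\infty=\|\delta_A-\delta_B\|_\infty=\dH(A,B)$, translation by $-f_0$ being norm-preserving.

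Next I would verify that $\Phi$ intertwines the algebraic operations. Subtracting the fixed function $f_0$ commutes both with pointwise maximum and with pointwise convex combination, so Proposition~\ref{p4.1}(2) and the affineness of $\delta$ yield $\Phi(\max\{A,B\})=\max\{\Phi(A),\Phi(B)\}$ and $\Phi((1-t)A\oplus tB)=(1-t)\Phi(A)+t\Phi(B)$ for all $A,B\in\HH$ and $t\in[0,1]$ (on $\IS^*\setminus\Gamma_0$ every term equals $0$, so these identities hold there trivially). Since Corollary~\ref{c4.3} guarantees that $\HH$ is closed under taking maximum and producing convex combinations, it follows that $\Phi(\HH)$ is closed under $\max$ and under convex combinations inside $l_\infty(\IS^*)$; that is, $\Phi(\HH)$ is a convex max-subsemilattice of the Banach lattice $l_\infty(\IS^*)$, isometric to $\HH$ via $\Phi$.

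The only delicate point is the first one: confirming that $\delta_C-f_0$ is an honest element of $l_\infty(\IS^*)$, which hinges on $\delta_C$ and $f_0$ being infinite on exactly the same set $\IS^*\setminus V^*_\HH$ and boundedly apart elsewhere. This is precisely where finiteness of the Hausdorff distance inside a component, together with the coincidence of dual characteristic cones from Section~\ref{s3}, is used; once $\Phi$ is known to map into $l_\infty(\IS^*)$, the remaining assertions are routine transcriptions of Propositions~\ref{p2.1} and~\ref{p4.1} and Corollary~\ref{c4.3}.
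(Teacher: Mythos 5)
Your argument is correct and is essentially the paper's (implicit) proof: compose the canonical representation $\delta$ with the translation $g\mapsto g-\delta_{C_0}$ identifying the metric component $\IB_{<\infty}(\delta_{C_0})$ of $\bar l_\infty(\IS^*)$ with $l_\infty(\Gamma_0)\subseteq l_\infty(\IS^*)$, and invoke Propositions~\ref{p2.1} and \ref{p4.1} together with Corollary~\ref{c4.3}. Your extra care in checking that $\delta_C$ and $\delta_{C_0}$ are infinite on exactly the same set $\IS^*\setminus V^*_\HH$ is a worthwhile explicit verification of a point the paper leaves to the reader.
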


A subset of a Banach lattice is called a {\em max-subsemilattice} is it is closed under the operation of taking maximum.

By a recent result of Banakh and Cauty \cite{BC}, each non-locally compact closed convex subset of a Banach space is homeomorphic to an infinite-dimensional Hilbert space. This result combined with Corollary~\ref{c4.4} implies:

\begin{corollary}\label{c4.5} Let $X$ be a Banach space. A metric component $\mathcal H$ of $\ConvH(X)$ is homeomorphic to an infinite-dimensional Hilbert space if and only if $\mathcal H$ is not locally compact.
\end{corollary}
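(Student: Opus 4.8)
The plan is to prove both implications by realizing the metric component $\mathcal H$, via Corollary~\ref{c4.4}, as a \emph{closed} convex subset of a Banach space and then quoting the Banakh--Cauty theorem. The forward implication is purely topological, while the reverse one rests on identifying where the completeness of $X$ enters.

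First I would dispose of the ``only if'' direction. If $\mathcal H$ is homeomorphic to an infinite-dimensional Hilbert space $H$, then $\mathcal H$ cannot be locally compact: by the Riesz Lemma the closed unit ball of the infinite-dimensional normed space $H$ is non-compact, so no point of $H$ has a compact neighborhood, i.e. $H$ is not locally compact. Since local compactness is preserved by homeomorphisms, $\mathcal H$ is not locally compact either.

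For the ``if'' direction, suppose $\mathcal H$ is not locally compact. By Corollary~\ref{c4.4}, $\mathcal H$ is isometric to a convex max-subsemilattice $K$ of the Banach lattice $l_\infty(\IS^*)$. To apply the Banakh--Cauty theorem I must verify that $K$ is closed in $l_\infty(\IS^*)$, and this is the one point that requires the completeness of $X$. Since $X$ is a Banach space, $\ConvH(X)$ is a complete hypermetric space: a $\dH$-Cauchy sequence of non-empty closed convex sets converges in the Hausdorff metric to a non-empty closed set, which is again convex because convexity is preserved under Hausdorff limits. Consequently each metric component $\mathcal H$ is a complete metric space, so its isometric image $K$ is a complete, hence closed, subset of $l_\infty(\IS^*)$. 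Then $K$ is a non-locally-compact (being isometric to $\mathcal H$) closed convex subset of the Banach space $l_\infty(\IS^*)$, so the Banakh--Cauty theorem applies and shows that $K$, and therefore $\mathcal H$, is homeomorphic to an infinite-dimensional Hilbert space.

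The main, and essentially the only, obstacle is the closedness of $K$: everything else is immediate from the quoted results. I therefore expect the crux of the write-up to be the completeness argument for the metric components of $\ConvH(X)$, which is precisely the step that uses the hypothesis that $X$ is complete rather than merely normed.
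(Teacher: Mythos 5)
Your proof is correct and follows essentially the same route as the paper, which likewise derives the corollary by combining Corollary~\ref{c4.4} with the Banakh--Cauty theorem on non-locally compact closed convex subsets of Banach spaces. The only difference is that you explicitly verify the closedness of the isometric image via the completeness of $\ConvH(X)$, a point the paper leaves implicit; your completeness argument is sound.
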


This corollary reduces the problem of recognition of the topology of non-locally compact components of $\ConvH(X)$ to calculating their densities. This problem was considered in \cite{BH2} and \cite{BH3}). In particular, \cite{BH2} contains the following characterization:

\begin{proposition}\label{p4.7} For a Banach space $X$ and a metric component $\HH$ of the space $\ConvH(X)$ the following conditions are equivalent:
\begin{enumerate}
\item $\HH$ is separable;
\item $\dens(\HH)<\mathfrak c$;
\item $\HH$ contains a polyhedral convex set;
\item the characteristic cone $V_\HH$ is polyhedral and belongs to $\HH$;
\end{enumerate}
\end{proposition}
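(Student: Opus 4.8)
The plan is to prove the cycle $(3)\Rightarrow(4)\Rightarrow(3)$ together with $(3)\Rightarrow(1)\Rightarrow(2)\Rightarrow(3)$, using the canonical representation $\delta$ of Proposition~\ref{p2.1} and the dual characteristic cone $V_{\HH}^*$ as the main bookkeeping device. I keep in mind throughout that, by Lemma~\ref{l3.1} and the remarks following it, the cones $V_{\HH}$ and $V_{\HH}^*$ depend only on the component, and that $\delta_C(x^*)=\sup x^*(C)$ is finite exactly on $V_{\HH}^*\cap\IS^*$, taking the value $+\infty$ elsewhere. In particular the extended arithmetic on $\bar\IR$ forces $\dH(A,B)=\sup_{x^*\in V_{\HH}^*\cap\IS^*}|\delta_A(x^*)-\delta_B(x^*)|$ for all $A,B\in\HH$, so the isometry type of $\HH$ is controlled entirely by the behaviour of the support functions on the set $K:=V_{\HH}^*\cap\IS^*$.

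The equivalence $(3)\Leftrightarrow(4)$ is the algebraically cheap part. The implication $(4)\Rightarrow(3)$ is immediate, since $V_{\HH}$ is itself a polyhedral convex set lying in $\HH$. For $(3)\Rightarrow(4)$, suppose $\HH$ contains a polyhedral set $P=\bigcap_{i=1}^n f_i^{-1}\big((-\infty,a_i]\big)$. I would first note that $V_{\HH}=V_P=\bigcap_{i=1}^n f_i^{-1}\big((-\infty,0]\big)$ is polyhedral by definition, this being the instance of Lemma~\ref{l3.1} corresponding to the finitely generated dual cone $V_P^*=\cone\{f_1,\dots,f_n\}$. To see $V_{\HH}\in\HH$, that is $\dH(P,V_P)<\infty$, I would pass to the finite-dimensional quotient $X/\bigcap_i\Ker f_i$, in which $P$ projects to an honest finite-dimensional polyhedron; the Minkowski--Weyl decomposition writes the latter as the sum of a bounded polytope and its recession cone, and lifting back (the kernel being contained in $V_P$) gives $P=Q\oplus V_P$ for a bounded set $Q$. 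Then Proposition~\ref{p4.2}(1) yields $\dH(P,V_P)=\dH(Q\oplus V_P,\{0\}\oplus V_P)\le\dH(Q,\{0\})<\infty$.

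For $(3)\Rightarrow(1)$ I would exploit that a polyhedral $P\in\HH$ forces $V_{\HH}^*=\cone\{f_1,\dots,f_n\}$ to lie in a finite-dimensional subspace of $X^*$, so $K=V_{\HH}^*\cap\IS^*$ is compact. By the metric formula above, $C\mapsto\delta_C|_K$ is an isometry of $\HH$ into the bounded functions on $K$. Since the $\delta_C$ are support functions they restrict to convex, positively homogeneous functions on the cone $V_{\HH}^*$, and they are uniformly bounded on $K$ (all lie within finite Hausdorff distance of $P$, hence $\|\delta_C-\delta_P\|_\infty<\infty$ on $K$); uniformly bounded convex functions on a finite-dimensional domain are equi-Lipschitz on compact subsets of the relative interior of $V_{\HH}^*$, so by an Arzel\`a--Ascoli argument the image is separable. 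Thus $\HH$ is separable. The implication $(1)\Rightarrow(2)$ is trivial, as separability gives $\dens(\HH)\le\aleph_0<\mathfrak c$.

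The remaining implication $(2)\Rightarrow(3)$, which I would prove contrapositively as ``$\HH$ contains no polyhedral set $\Rightarrow\dens(\HH)\ge\mathfrak c$'', carries the real weight and is the main obstacle. The strategy is a Kunen--Shelah style construction: the failure of polyhedrality of a fixed $C\in\HH$ should yield a bounded, geometrically independent sequence of supporting data — points $x_n$ on the boundary of $C$ together with supporting functionals that cannot be replaced by a finite subfamily. For each $A\subseteq\w$ one then builds $C_A\in\HH$ by independently toggling a small convex modification (excising or extruding a cap of fixed size) at the sites indexed by $A$, in such a way that distinct toggles do not interfere. The crux is a uniform separation estimate, in the spirit of the biorthogonality computation in the proof of Corollary~\ref{cor3}, establishing $\dH(C_A,C_B)\ge\e$ for a fixed $\e>0$ whenever $A\ne B$; this produces an $\e$-separated family $\{C_A:A\subseteq\w\}$ of cardinality $2^{\w}=\mathfrak c$, whence $\dens(\HH)\ge\mathfrak c$. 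I expect the delicate points to be extracting the infinite independent family \emph{purely} from the failure of polyhedrality, rather than from an assumed biorthogonal system, and ensuring that infinitely many simultaneous local modifications keep the result closed, convex, and in the same component.
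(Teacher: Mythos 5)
First, a point of reference: the paper does not prove Proposition~\ref{p4.7} at all --- it is imported from the companion preprint \cite{BH2} --- so there is no internal argument to match yours against; your proposal has to stand on its own. On its own terms, $(4)\Rightarrow(3)$ and $(1)\Rightarrow(2)$ are fine, and $(3)\Rightarrow(4)$ is essentially right: from $P=\bigcap_{i=1}^n f_i^{-1}((-\infty,a_i])\in\HH$ one gets $Z=\bigcap_i\Ker f_i\subseteq\bigcap_i f_i^{-1}((-\infty,0])=V_P=V_\HH$, and Minkowski--Weyl in $X/Z$ lifted back through Corollary~\ref{c5.3} gives $P=Q\oplus V_\HH$ with $Q$ bounded, hence $\dH(P,V_\HH)<\infty$. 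The fatal gap is $(2)\Rightarrow(3)$, which you leave as a sketch and which is the entire content of the cited theorem. Manufacturing a $\mathfrak c$-sized uniformly separated family in $\HH$ from the sole hypothesis that $\HH$ contains \emph{no} polyhedral set is the hard part; your ``toggle a cap at each of infinitely many independent sites'' heuristic presupposes exactly the infinite independent family whose extraction is at issue. Note moreover that by your own equivalence $(3)\Leftrightarrow(4)$ the hypothesis to be negated splits into two very different failure modes: either $V_\HH$ is not polyhedral, or $V_\HH$ is polyhedral but $\dH(C,V_\HH)=\infty$ for $C\in\HH$. The parabola $C=\{(x,y)\in\IR^2:y\ge x^2\}$ realizes the second mode: its characteristic cone is a single ray, yet its component contains no polyhedron and must have density $\ge\mathfrak c$. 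Here the separation has to come from the curvature of $\partial C$ at infinity, not from any finite or biorthogonal system of supporting data, and your sketch gives no handle on this case.

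There is also a genuine flaw in your $(3)\Rightarrow(1)$. The Arzel\`a--Ascoli step cannot work as written: equi-Lipschitzness of the support functions holds only on compacta in the relative interior of $V^*_\HH$, whereas the Hausdorff metric is the sup-norm over all of $K=V^*_\HH\cap\IS^*$ including its relative boundary, where the family $\{\delta_C|_K:C\in\HH\}$ is not equicontinuous and not totally bounded. Indeed it cannot be totally bounded, since in case 2b of Theorem~\ref{main} these components are homeomorphic to $l_2$ and hence not locally compact; concretely, over the cone $V=\{(x,y)\in\IR^2:x\ge0,\;y\ge 0\}$ the sets $D_s=\cconv\big(V\cup\{(-1,s)\}\big)$ satisfy $\dH(D_s,V)\le 1$ for all $s>0$ while $\dH(D_s,D_{2s})$ stays bounded away from $0$, as one sees by evaluating the support functions at directions close to $(-1,0)$. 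So an argument that outputs compactness proves too much. The conclusion is nevertheless true, and the correct route is the reduction you already set up: $Z\subseteq V_\HH$ has finite codimension, Corollary~\ref{c5.3} identifies $\HH$ isometrically with a component of $\ConvH(X/Z)$ over a finite-dimensional space, and one must then prove separability of a component of $\ConvH(\IR^m)$ containing a polyhedron directly (for instance by showing that polyhedra with data from a countable dense set are dense in such a component), rather than by a compactness argument.
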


\begin{remark}\label{r4.6} Each metric component of $\ConvH(X)$ being homeomorphic to a convex set, is connected and thus coincides with a connected component of $\ConvH(X)$. Hence there is no difference between metric and connected components of $\ConvH(X)$, so using the
term {\em component} of $\ConvH(X)$ (without an adjective ``metric'' or ``connected'') will not lead to misunderstanding.
\end{remark}

\section{Operators between spaces of convex sets}\label{s5}

Each linear continuous operator $T:X\to Y$ between normed spaces induces a map $\overline{T}:\ConvH(X)\to\ConvH(Y)$ assigning to each closed convex set $A\in\ConvH(X)$ the closure $\overline{T(A)}$ of its image $T(A)$ in $Y$.
In this section we study properties of the induced operator $\overline{T}$.
We start with algebraic properties that trivially follow from the linearity and continuity of the operator $T$.

\begin{proposition}\label{p5.1} If $T:X\to Y$ is a linear continuous operator between Banach spaces  and $\overline{T}:\ConvH(X)\to\ConvH(X)$ is the induced operator, then

\begin{enumerate}
\item $\overline{T}(\max\{A,B\})=\max\{\overline{T}(A),\overline{T}(B)\}$;
\item $\overline{T}(r\cdot A)=r\cdot \overline{T}(A)$;
\item $\overline{T}(A\oplus B)=\overline{T}(A)\oplus\overline{T(B)}$;
\item $\overline{T}((1-t)A\oplus tB)=(1-t)\overline{T}(A)\oplus t\overline{T(B)}$;
\end{enumerate}
 for any sets $A,B\in\ConvH(X)$ and real numbers $r\in\IR$ and $t\in[0,1]$.
\end{proposition}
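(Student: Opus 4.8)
The plan is to reduce every identity to two structural facts about $T$: its \emph{linearity}, which yields $T(A+B)=T(A)+T(B)$ and $T(rA)=rT(A)$ for every real $r$, together with $T(\conv S)=\conv(T(S))$ for every $S\subset X$ (since a linear map is affine, hence commutes with forming convex hulls); and its \emph{continuity}, which yields the inclusion $T(\overline S)\subset\overline{T(S)}$ for every $S\subset X$. Combining continuity with the trivial inclusion $T(S)\subset T(\overline S)$ and the monotonicity of closure gives the key auxiliary identity
$$\overline{T(\overline S)}=\overline{T(S)}\quad\text{for every } S\subset X,$$
which says that $\overline T$ is insensitive to closing a set before applying $T$. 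This single observation absorbs all the closures built into the definitions of $\oplus$ and $\max$.

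With this in hand, each item is a short computation. For item (2), when $r\ne 0$ scalar multiplication is a homeomorphism of $X$, so $\overline T(rA)=\overline{T(rA)}=\overline{rT(A)}=r\overline{T(A)}=r\cdot\overline T(A)$, while the case $r=0$ is immediate since both sides equal $\{0\}$. For item (3), the auxiliary identity together with linearity gives $\overline T(A\oplus B)=\overline{T(\overline{A+B})}=\overline{T(A+B)}=\overline{T(A)+T(B)}$, whereas the definition of $\oplus$ applied to the right-hand side gives $\overline T(A)\oplus\overline T(B)=\overline{\,\overline{T(A)}+\overline{T(B)}\,}$; these agree by the elementary fact $\overline{S+S'}=\overline{\overline S+\overline{S'}}$, valid for arbitrary subsets of a normed space. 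For item (1), the auxiliary identity and affineness of $T$ give $\overline T(\max\{A,B\})=\overline{T(\conv(A\cup B))}=\overline{\conv(T(A)\cup T(B))}=\oco(T(A)\cup T(B))$, and this coincides with $\max\{\overline T(A),\overline T(B)\}=\oco(\overline{T(A)}\cup\overline{T(B)})$ because taking the closed convex hull absorbs the closures of the constituent sets. Finally, item (4) follows by composing (2) and (3): $\overline T((1-t)A\oplus tB)=\overline T((1-t)A)\oplus\overline T(tB)=(1-t)\overline T(A)\oplus t\overline T(B)$.

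No step presents a genuine obstacle — this is why the statement is asserted to follow trivially. The only point requiring any care is the bookkeeping of closures, and that is disposed of once and for all by the auxiliary identity $\overline{T(\overline S)}=\overline{T(S)}$ and its two companions, namely that Minkowski addition and the closed convex hull each absorb inner closures; all three rest on the continuity of addition and scalar multiplication in a normed space.
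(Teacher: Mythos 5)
Your proof is correct and follows exactly the route the paper intends: the paper offers no details, asserting only that the identities ``trivially follow from the linearity and continuity of the operator $T$,'' and your argument is precisely that — linearity handles sums, scalars, and convex hulls, while continuity (via the auxiliary identity $\overline{T(\overline S)}=\overline{T(S)}$ and the absorption of inner closures by Minkowski sums and closed convex hulls) handles all the bookkeeping. Nothing to correct.
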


We shall be mainly interested in the operators $\overline{T}$ induced by quotient operators $T$.
We recall that for a closed linear subspace $Z$ of a normed space $X$ the quotient normed space $X/Z=\{x+Z:x\in X\}$ carries the quotient norm
$$\|x+Z\|=\inf_{y\in x+Z}\|y\|.$$
By $q:X\to X/Z$, $q:x\mapsto x+Z$ we shall denote the quotient operator and by $\bar q:\ConvH(X)\to\ConvH(X/Z)$ the induced operator between the spaces of closed convex sets.

For a closed convex set $C\subset X$ by $C/Z$ we denote the image $q(C)\subset X/Z$. So, $\bar q(C)=\overline{C/Z}$. If $Z\subset V_C$, then the set $C/Z$ is closed in $X/Z$ and hence $\bar q(C)=C/Z$. Indeed, $Z\subset V_C$ implies that $C+Z=C$ and hence $C/Z=(X/Z)\setminus q(X\setminus C)$ is closed in $X/Z$ being the complement of the set $q(X/\setminus C)$ which is open as the image of the open set $X\setminus C$ under the open map $q:X\to X/Z$.

We shall need the following simple reduction lemma:

\begin{lemma}\label{l5.2} Let $Z$ be a closed linear subspace of a normed space $X$ and $A,B$ are non-empty closed convex subsets of $X$. If $Z\subset V_A\cap V_B$, then $\dH(A,B)=\dH(A/Z,B/Z)$.
\end{lemma}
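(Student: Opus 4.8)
The plan is to reduce the equality of Hausdorff distances to a single distance-preservation identity for the quotient operator $q\colon X\to X/Z$. First I would record that the hypothesis $Z\subset V_A\cap V_B$ forces $A+Z=A$ and $B+Z=B$. Indeed, for $z\in Z\subset V_A$ the definition of the characteristic cone gives $c+z\in A$ for every $c\in A$, and since $Z$ is a linear subspace we also have $-z\in Z\subset V_A$, whence $c-z\in A$; together with $0\in Z$ this yields $A+Z=A$, and symmetrically $B+Z=B$. As noted just before the lemma, $Z\subset V_A\cap V_B$ also guarantees that the images $A/Z=q(A)$ and $B/Z=q(B)$ are already closed, so $\bar q(A)=A/Z$ and $\bar q(B)=B/Z$ and we may compute distances to $q(A)$ and $q(B)$ directly.

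The key step is the identity $\dist\big(q(x),q(B)\big)=\dist(x,B)$, valid for every $x\in X$ whenever $B+Z=B$. To prove it I would unfold the quotient norm $\|x+Z\|=\inf_{z\in Z}\|x+z\|$ and write
\[
\dist\big(q(x),q(B)\big)=\inf_{b\in B}\|q(x-b)\|=\inf_{b\in B}\inf_{z\in Z}\|x-b-z\|=\inf_{b\in B,\,z\in Z}\|x-(b+z)\|.
\]
Because $B+Z=B$, the set $\{b+z:b\in B,\ z\in Z\}$ equals $B$, so the last infimum collapses to $\inf_{b'\in B}\|x-b'\|=\dist(x,B)$. The same computation with the roles of $A$ and $B$ interchanged gives $\dist\big(q(y),q(A)\big)=\dist(y,A)$ for every $y\in X$, using $A+Z=A$.

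Finally I would assemble the two halves of the Hausdorff (hyper)metric. Since every element of $A/Z$ is of the form $q(a)$ with $a\in A$, the identity above yields $\sup_{\alpha\in A/Z}\dist(\alpha,B/Z)=\sup_{a\in A}\dist\big(q(a),q(B)\big)=\sup_{a\in A}\dist(a,B)$, and symmetrically $\sup_{\beta\in B/Z}\dist(\beta,A/Z)=\sup_{b\in B}\dist(b,A)$. Taking the maximum of these two suprema gives $\dH(A/Z,B/Z)=\dH(A,B)$. No genuine obstacle arises here: the whole argument is a manipulation of infima and suprema of non-negative extended reals, so it remains valid verbatim when $\dH(A,B)=\infty$. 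The only point demanding slight care is the collapse of the double infimum $\inf_{b\in B}\inf_{z\in Z}$ to a single infimum over $B$, which is exactly where the hypothesis $Z\subset V_A\cap V_B$ (in the form $A+Z=A$ and $B+Z=B$) enters.
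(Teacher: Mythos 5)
Your proof is correct and rests on the same two ingredients as the paper's: the quotient norm as an infimum over $Z$ and the identity $B+Z=B$ (resp.\ $A+Z=A$) forced by $Z\subset V_A\cap V_B$. You merely package the argument more directly, as the exact pointwise identity $\dist\bigl(q(x),q(B)\bigr)=\dist(x,B)$ followed by taking suprema, whereas the paper proves the inequality $\le$ from $\|q\|\le 1$ and obtains $\ge$ by lifting an approximating point $b'\in B/Z$ back to $B$ in a proof by contradiction; the substance is the same.
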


\begin{proof} The inequality $\dH(A/Z,B/Z)\le \dH(A,B)$ follows from $\|q\|\le 1$. Assuming that
$\dH(A/Z,B/Z)<\dH(A,B)$, we can find a point $a\in A$ with $\dist(a,B)>\dH(A/Z,b/Z)$ or a point  $b\in B$ with $\dist(b,A)>\dH(A/Z,B/Z)$. Without loss of generality, we can assume that $\dist(a,B)>\dH(A/Z,B/Z)$ for some point $a\in A$. Consider its image $a'=q(a)\in A/Z$ under the quotient operator $q:X\to X/Z$. By the definition of the Hausdorff metric, $\dH(A/Z,B/Z)<\dist(a,B)$, there is a point $b'\in B/Z$ such that $\|b'-a'\|<\dist(a,B)$. By the definition of the quotient norm, there is a vector $z\in Z$ such that $q(z)=b'-a'$ and $\|z\|<\dist(a,B)$. Now consider the
point $b=a+z$ and observe that  $q(b)=q(a)+q(z)=a'+b'-a'=b'\in B/Z$ and hence $b\in q^{-1}(B/Z)=B+Z=B$. So, $\dist(a,B)\le\|a-b\|=\|z\|<\dist(a,B)$, which is a desired contradiction that completes the proof of the equality $\dH(A,B)=\dH(A/Z,B/Z)$.
\end{proof}

\begin{corollary}\label{c5.3} Let $X$ be a normed space $X$, $\HH$ be a component of the space $\ConvH(X)$, and $Z$ be a closed linear subspace of $X$. If $Z\subset V_\HH$, then the quotient operator
$$\bar q:\HH\to\HH/Z,\;\;\bar q:C\mapsto C/Z,$$ maps isometrically the component $\HH$ of $\ConvH(X)$ onto the component $\HH/Z$ of $\ConvH(X/Z)$ containing some (equivalently, each) convex set $C/Z$ with $C\in\HH$.
\end{corollary}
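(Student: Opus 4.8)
The plan is to show that $\bar q$ restricted to $\HH$ is a well-defined isometry onto the component $\HH/Z$ of $\ConvH(X/Z)$, and the key technical input is the reduction Lemma~\ref{l5.2}. First I would observe that the hypothesis $Z\subset V_\HH$ means $Z\subset V_C$ for every $C\in\HH$, so by the discussion following Lemma~\ref{l5.2} each image $C/Z=q(C)$ is already closed in $X/Z$; hence $\bar q(C)=C/Z$ and no extra closure is needed. Thus $\bar q$ sends $\HH$ into $\ConvH(X/Z)$ by $C\mapsto C/Z$.

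Next I would verify that $\bar q|_\HH$ is an isometry. Given $A,B\in\HH$ we have $Z\subset V_A\cap V_B$, so Lemma~\ref{l5.2} applies directly and gives $\dH(A/Z,B/Z)=\dH(A,B)$. In particular $\bar q|_\HH$ is injective and distance-preserving. This isometry also shows that $\bar q$ maps $\HH$ into a single component of $\ConvH(X/Z)$: since $\dH(A,B)<\infty$ for $A,B\in\HH$, we get $\dH(A/Z,B/Z)<\infty$, so all images lie in one component, which I name $\HH/Z$.

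It remains to prove surjectivity onto $\HH/Z$, and this is the step I expect to be the main obstacle. Fix $C\in\HH$ and let $\HH/Z$ be the component of $\ConvH(X/Z)$ containing $C/Z$. Given an arbitrary $D\in\HH/Z$, I must produce $\tilde D\in\HH$ with $\tilde D/Z=D$. The natural candidate is the full preimage $\tilde D=q^{-1}(D)$, which is a non-empty closed convex subset of $X$ satisfying $\tilde D+Z=\tilde D$, whence $Z\subset V_{\tilde D}$ and $\tilde D/Z=D$ (using again that $q$ is an open map, so $q^{-1}(D)/Z=D$). I would then check $\tilde D\in\HH$, i.e.\ $\dH(\tilde D,C)<\infty$: since $D$ and $C/Z$ lie in the same component, $\dH(D,C/Z)<\infty$, and because $Z\subset V_{\tilde D}\cap V_C$, Lemma~\ref{l5.2} yields $\dH(\tilde D,C)=\dH(\tilde D/Z,C/Z)=\dH(D,C/Z)<\infty$. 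Hence $\tilde D\in\HH$ and $\bar q(\tilde D)=D$, proving $\bar q$ maps $\HH$ onto $\HH/Z$. The parenthetical ``equivalently, each'' in the statement is then immediate: the component $\HH/Z$ does not depend on the choice of $C\in\HH$, because any two such $C/Z$ lie at finite $\dH$-distance by the isometry and so determine the same component.
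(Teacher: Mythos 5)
Your proposal is correct and follows exactly the route the paper intends: the paper states Corollary~\ref{c5.3} without proof as an immediate consequence of Lemma~\ref{l5.2}, and your argument (closedness of $C/Z$ from the discussion preceding the lemma, the isometry from the lemma itself, and surjectivity via the full preimage $q^{-1}(D)$) is the natural fleshing-out of that derivation. The only cosmetic remark is that $q^{-1}(D)/Z=D$ needs only surjectivity of $q$, not openness.
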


\section{Proof of Theorem~\ref{main}}\label{s7}

Let $X$ be a Banach space and $\HH$ be a component of the space $\ConvH(X)$.

If $\HH$ contains no polyhedral convex set, then by Propositions~\ref{p4.7}, it has density $\dens(\HH)\ge\mathfrak c$. Consequently, $\HH$ is not locally compact and by
Corollary~\ref{c4.5}, $\HH$ is homeomorphic to the non-separable Hilbert space $l_2(\kappa)$ of density $\kappa=\dens(\HH)\ge\mathfrak c$.

It remains to analyze the topological
structure of $\HH$ if it contains a polyhedral convex set. In this case Proposition~\ref{p4.7} guarantees that the characteristic cone $V_\HH$ belongs to $\HH$ and is polyhedral in $X$. If $V_\HH=X$, then $\HH=\{X\}$ is a singleton. So, we assume that $V_\HH\ne X$. Since the cone $V_\HH$ is polyhedral, the the closed linear subspace $Z=-V_{\HH}\cap V_{\HH}$ has finite codimension in $X$. Then the quotient Banach space $\tilde X=X/Z$ is finite-dimensional. Let $q:X\to \tilde X$ be the quotient operator.

By Corollary~\ref{c5.3}, the component $\HH$ is isometric to the component $\tilde\HH=\HH/Z$ of the space $\ConvH(\tilde X)$ of closed convex subsets of the finite-dimensional Banach space $\tilde X$. The component $\tilde\HH$ contains the polyhedral convex cone
$V_{\tilde\HH}=q(V_{\HH})$ which has the property $-V_{\tilde \HH}\cap V_{\tilde \HH}=\{0\}$.

The cone $V_{\tilde \HH}$ can be of two types.
\smallskip

1. The cone $V_{\tilde \HH}=\{0\}$ is trivial. In this case $\HH$ contains the closed linear subspace $Z=V_\HH$ of finite codimension in $X$. Taking into account that $V_\HH\ne X$, we conclude that $\dim(\tilde X)\ge 1$. Depending on the value of $\dim(\tilde X)$, we have two subcases.
\smallskip

1a. The dimension $\dim(\tilde X)=1$ and hence $\HH$ contains the linear subspace $Z=V_\HH$ of codimension 1 in $X$.
In this case $\tilde \HH$ coincides with the space $\BConv_H(\tilde X)$ of non-empty bounded closed convex subsets of the one-dimensional Banach space $\tilde X$ and hence $\tilde \HH$ is isometric to the half-plane $\IR\times\bar\IR_+$.
\smallskip

1b. The dimension $\dim(\tilde X)\ge 2$ and hence $\HH$ contains the linear subspace $Z$ of codimension $\ge 2$ in $X$. In this case $\tilde \HH$ coincides with the space $\BConv_H(\tilde X)$ of non-empty bounded closed convex subsets of the finite-dimensional Banach space $\tilde X$ of finite dimension $\dim(\tilde X)\ge 2$. By the result of Nadler, Quinn and Stavrakas \cite{NQS}, the space $\BConv_H(\tilde X)$ is homeomorphic to the Hilbert cube manifold $Q\times\bar\IR_+$.
\medskip

2. The characteristic cone $V_{\tilde \HH}\ne\{0\}$ is not trivial. This case has two subcases.
\smallskip

2a. $\dim(\tilde X)=\dim(V_{\tilde\HH})=1$. In this case the component $\HH$ (and its isometric copy $\HH$) is isometric to the real line $\IR$.

2b. $\dim(\tilde X)\ge 2$. In this case we shall prove that the component $\tilde \HH$ (and its isometric copy $\HH$) is homeomorphic to the separable Hilbert space $l_2$. This will follow from the separability of $\HH$ and Corollary~\ref{c4.5} as soon as we check that the space $\tilde \HH$ is not locally compact. To prove this fact, it suffices for every positive $\e<1$ to construct a sequence of closed convex sets $\{C_n\}_{n\in\IN}\subset\tilde \HH$ such that $\dH(C_n,V_{\tilde \HH})\le\e$ and $\inf_{n\ne m}\dH(C_n,C_m)>0$.

The cone $V_{\tilde \HH}$ is polyhedral and hence is generated by for some finite set $E\subset \tilde X\setminus \{0\}$, see \cite{Klee} or Theorem 1.1 of \cite{Ziegler}. For every $e\in E$ the vector  $-e$ does not belong to $V_{\tilde \HH}$. Then the Hahn-Banach Theorem yields a linear functional $h_e\in X^*$ such that $h_e(-e)<\inf h_e(V_{\tilde\HH})=0$. It can be shown that the functional $h=\sum_{e\in E}h_e$ has the property $h(v)>0$ for all $v\in V_{\tilde\HH}\setminus\{0\}$.

Since $\dim(\tilde X)\ge 2$ and $V_{\tilde \HH}\ne \tilde X$, we can find a non-zero linear continuous functional $f:\tilde X\to \IR$ such that $\sup f(V_{\tilde \HH})=0$ and the intersection
$f^{-1}(0)\cap V_{\tilde\HH}$ contains a non-zero vector $x\in \tilde X$. Multiplying $x$ by a suitable positive constant, we can assume that $h(x)=1$. Since $h^{-1}(0)\cap V_{\tilde \HH}=\{0\}\ne f^{-1}(0)\cap V_{\tilde \HH}$, the functionals $h$ and $f$ are distinct and hence there is a vector $y\in h^{-1}(0)\setminus f^{-1}(0)$ with norm $\|y\|=\e$. Replacing $y$ by $-y$, if necessary, we can assume that $f(y)>0$.

For every $n\in\w$ consider the point $c_n=3^nx+y$ and the closed convex set $$C_n=\max\big\{V_{\tilde \HH},\{c_n\}\big\}=\cconv(V_{\tilde \HH}\cup \{c_n\})\subset \tilde X.$$ It follows from $x\in V_{\tilde \HH}$ and $\dist(c_n,V_{\tilde \HH})\le\dist(3^nx+y,3^nx)=\|y\|=\e$ that $\dH(C_n,V_{\tilde \HH})\le\e$.

We claim that $\inf\limits_{n\ne m}\dH(C_n,C_m)\ge\delta$ where $$\delta=\frac12 f(y)\le \frac12\|y\|=\frac12\e<\frac12.$$
This will follow as soon as we check that $\dist(c_n,C_m)\ge\delta$ for any numbers $n<m$.

Assuming conversely that $\dist(c_n,C_m)<\delta$ and taking into account that the convex set $\conv(V_{\tilde \HH}\cup\{c_m\})$ is dense in $C_m$, we can find a point $c\in \conv(V_{\tilde \HH}\cup\{c_m\})$ such that $\dist(c_n,c)<\delta$. The point $c$ belongs to the convex hull of the set $V_{\tilde \HH}\cup\{c_m\}$ and hence can be written as a convex combination $c=tc_m+(1-t)v=t(3^mx+y)+(1-t)v$ for some $t\in[0,1]$ and $v\in V_{\tilde \HH}$. Observe that $h(c_n)=h(3^nx+y)=3^nh(x)+h(y)=3^m\cdot 1+0=3^n$ while  $h(c)=th(c_m)+(1-t)h(v)\ge th(c_m)=3^mt$. Then
$$3^mt-3^n\le h(c)-h(c_n)\le|h(c)-h(c_n)|\le\|h\|\cdot\|c-c_n\|<1\cdot\delta$$
and hence $$t<3^{n-m}+3^{-m}\delta\le \frac13+\frac13\delta<\frac13+\frac16=\frac12.$$

Next, we apply the functional $f$ to the points $c_n$ and $c$. Since $f(x)=0$, we get $f(c_n)=f(3^nx+y)=f(y)=2\delta$. On the other hand, $f(V_{\tilde \HH})\subset(-\infty,0]$ implies $f(v)\le 0$ and hence
$$f(c)=f(tc_m+(1-t)v)=tf(3^mx+y)+(1-t)f(v)=tf(y)+(1-t)f(v)\le tf(y)=2\delta t.$$
Then $$\delta=2\delta(1-\tfrac12)<2\delta(1-t)\le |f(c_n)-f(c)|\le\|f\|\cdot\|c_n-c\|<\delta,$$ which is a desired contradiction.

The above proof can be visualized in the following picture:

\begin{picture}(300,130)(-50,40)
\put(-10,97){$0$}
\put(0,100){\vector(1,0){250}}
\put(0,100){\vector(1,0){22}}
\put(18,92){$x$}
\put(245,90){$h$}
\put(0,100){\circle*{3}}
\put(0,100){\vector(0,1){60}}
\put(0,100){\line(0,-1){45}}
\put(-10,152){$f$}
\put(0,100){\vector(0,1){22}}
\put(-8,120){$y$}
\put(0,100){\line(6,1){120}}

\put(40,120){\circle*{3}}
\put(38,125){$c_n$}
\put(120,120){\circle*{3}}
\put(120,120){\line(1,0){110}}
\put(118,125){$c_m$}

\put(0,100){\line(1,-2){21}}
\multiput(0,100)(6,0){2}{\line(2,-1){10}}
\multiput(22,100)(6,0){35}{\line(2,-1){10}}
\multiput(0,100)(2,-4){10}{\line(2,-1){10}}
\put(50,70){$V_{\tilde \HH}$}
\put(170,107){$C_m$}
\end{picture}

\newpage

\end{document}